\documentclass[a4paper,american]{amsart}
\usepackage{mathptmx}
\usepackage[T1]{fontenc}
\usepackage[latin9]{inputenc}
\setcounter{tocdepth}{3}
\usepackage[english]{babel}
\usepackage{refstyle}
\usepackage{units}
\usepackage{amsmath}
\usepackage{amsthm}
\usepackage{amssymb}
\usepackage[unicode=true,pdfusetitle,
 bookmarks=true,bookmarksnumbered=false,bookmarksopen=false,
 breaklinks=false,pdfborder={0 0 1},backref=false,colorlinks=false]
 {hyperref}
\hypersetup{
 bookmarksopen}

\makeatletter

\usepackage{verbatim}
\def\AS{\\ }
\def\SU{\substack}

\AtBeginDocument{\providecommand\secref[1]{\ref{sec:#1}}}
\AtBeginDocument{\providecommand\thmref[1]{\ref{thm:#1}}}
\AtBeginDocument{\providecommand\lemref[1]{\ref{lem:#1}}}
\AtBeginDocument{\providecommand\propref[1]{\ref{prop:#1}}}
\pdfpageheight\paperheight
\pdfpagewidth\paperwidth
\RS@ifundefined{subref}
  {\def\RSsubtxt{section~}\newref{sub}{name = \RSsubtxt}}
  {}
\RS@ifundefined{thmref}
  {\def\RSthmtxt{theorem~}\newref{thm}{name = \RSthmtxt}}
  {}
\RS@ifundefined{lemref}
  {\def\RSlemtxt{lemma~}\newref{lem}{name = \RSlemtxt}}
  {}

\theoremstyle{plain}
\numberwithin{equation}{section}
\numberwithin{figure}{section}
\numberwithin{table}{section}
    \newtheorem{stthm}{\protect\theoremname}[section]
    
    \newtheorem{spprop}{\protect\propositionname}[section]

 \newref{sec}{%
 name = \RSsectxt,
 names = \RSsecstxt,
 lsttxt = \RSlsttxt,
 lsttwotxt = \RSlsttwotxt,
 refcmd = \ref{#1}}
 \newref{thm}{%
 name = \theoremname~,
 lsttxt = \RSlsttxt,
 lsttwotxt = \RSlsttwotxt,
 refcmd = {\ref{#1}}}
 \usepackage{enumitem}
 \usepackage{enumitem}
 \newref{prop}{%
 name = \propositionname~,
 lsttxt = \RSlsttxt,
 lsttwotxt = \RSlsttwotxt,
 refcmd = {\ref{#1}}}
 \theoremstyle{plain}
 \newtheorem*{cor*}{\protect\corollaryname}
 \newref{cor}{%
 name = \corollaryname~,
 lsttxt = \RSlsttxt,
 lsttwotxt = \RSlsttwotxt,
 refcmd = {\ref{#1}}}
 \theoremstyle{plain}
 \newtheorem{sllem}{\protect\lemmaname}[section]
 \newref{lem}{%
 name = \lemmaname~,
 lsttxt = \RSlsttxt,
 lsttwotxt = \RSlsttwotxt,
 refcmd = {\ref{#1}}}
 \newref{sec}{%
 name = \RSsectxt,
 names = \RSsecstxt,
 lsttxt = \RSlsttxt,
 lsttwotxt = \RSlsttwotxt,
 refcmd = \ref{#1}}
 \theoremstyle{remark}
 \newtheorem*{rem*}{\protect\remarkname}
 \newref{rem}{%
 name = \remarkname~,
 lsttxt = \RSlsttxt,
 lsttwotxt = \RSlsttwotxt,
 refcmd = {\ref{#1}}}

\newcommand\mynobreakpar{\par\nobreak\@afterheading}

\makeatother

 \addto\captionsamerican{\renewcommand{\corollaryname}{Corollary}}
 \addto\captionsamerican{\renewcommand{\lemmaname}{Lemma}}
 \addto\captionsamerican{\renewcommand{\propositionname}{Proposition}}
 \addto\captionsamerican{\renewcommand{\remarkname}{Remark}}
 \addto\captionsamerican{\renewcommand{\theoremname}{Theorem}}
 \addto\captionsenglish{\renewcommand{\corollaryname}{Corollary}}
 \addto\captionsenglish{\renewcommand{\lemmaname}{Lemma}}
 \addto\captionsenglish{\renewcommand{\propositionname}{Proposition}}
 \addto\captionsenglish{\renewcommand{\remarkname}{Remark}}
 \addto\captionsenglish{\renewcommand{\theoremname}{Theorem}}
 \providecommand{\corollaryname}{Corollary}
 \providecommand{\lemmaname}{Lemma}
 \providecommand{\propositionname}{Proposition}
 \providecommand{\remarkname}{Remark}
 \providecommand{\theoremname}{Theorem}

\begin{document}
\selectlanguage{english}%
\def\rightmark{WEIGHTED ESTIMATES FOR SOLUTIONS OF THE $\bar\partial$ EQUATION AND APPLICATIONS}
\def\leftmark{P. CHARPENTIER, Y. DUPAIN \& M. MOUNKAILA}
\def\RSsectxt{Section~}%

\selectlanguage{american}%

\title{Weighted Estimates for Solutions of the $\overline{\partial}$-equation
for Lineally Convex Domains of Finite Type and Applications to weighted
Bergman projections}

\author{P. Charpentier, Y. Dupain \& M. Mounkaila}
\begin{abstract}
In this paper we obtain sharp weighted estimates for solutions of
the $\overline{\partial}$-equation in a lineally convex domains of
finite type. Precisely we obtain estimates in spaces of the form $L^{p}(\Omega,\delta^{\gamma})$,
$\delta$ being the distance to the boundary, with gain on the index
$p$ and the exponent $\gamma$. These estimates allow us to extend
the $L^{p}(\Omega,\delta^{\gamma})$ and lipschitz regularity results for weighted
Bergman projection obtained in \cite{CDM} for convex domains to more
general weights.
\end{abstract}

\keywords{lineally convex, finite type, $\overline{\partial}$-equation, weighted
Bergman projection}

\subjclass[2010]{32T25, 32T27}

\address{P. Charpentier, Universit\'e Bordeaux I, Institut de Math\'ematiques
de Bordeaux, 351, Cours de la Lib\'eration, 33405, Talence, France}

\address{M. Mounkaila, Universit\'e Abdou Moumouni, Facult\'e des Sciences,
B.P. 10662, Niamey, Niger}

\email{P. Charpentier: philippe.charpentier@math.u-bordeaux1.fr}

\email{M. Mounkaila: modi.mounkaila@yahoo.fr}

\maketitle

\section{Introduction}

The study of the regularity of the Bergman projection onto holomorphic
functions in a given Hilbert space is a very classical subject. When
the Hilbert space is the standard Lebesgue $L^{2}$ space on a smoothly
bounded pseudoconvex domain $\Omega$ in $\mathbb{C}^{n}$, many results
are known and there is a very large bibliography.

When the Hilbert space is a weighted $L^{2}$ space on a smoothly
bounded pseudoconvex domain $\Omega$ in $\mathbb{C}^{n}$, it is
well known for a long time that the regularity of the Bergman projection
depends strongly on the weight (\cite{Kohn-defining-function}, \cite{Bar92},
\cite{Christ96}). Until last years few results where known (see \cite{FR75},
\cite{Lig89}, \cite{BG95}, \cite{CDC97}) but recently some positive
and negative results where obtained by several authors (see for example \cite{Zey11},
\cite{Zey12}, \cite{Zey13a}, \cite{Zey13b}, \cite{CDM}, \cite{CPDY},
\cite{CZ}, \cite{Zey} and references therein).

In this paper we are interested in some generalization of the result
obtained in \cite{CDM} for convex domains of finite type.

Let $\Omega$ be a convex domain of finite type in $\mathbb{C}^{n}$.
Let $g$ be a gauge function for $\Omega$ and define $\rho_{0}=g^{4}e^{1-\nicefrac{1}{g}}-1$.
Let $P_{\omega_{0}}$ be the Bergman projection of the space $L^{2}\left(\Omega,\omega_{0}\right)$,
where $\omega_{0}=\left(-\rho_{0}\right)^{r}$, $r\in\mathbb{Q}_{+}$.
Then in \cite[Theorem 2.1]{CDM} we proved that $P_{\omega_{0}}$
maps continuously the spaces $L^{p}\left(\Omega,\delta_{\Omega}^{\beta}\right)$,
$p\in\left]1,+\infty\right[$, $0<\beta+1\leq p(r+1)$, into themselves,
$\delta_{\Omega}$ being the distance to the boundary of $\Omega$.
Here we consider a weight $\omega$ which is a non negative rational
power of a $\mathcal{C}^{2}$ function in $\overline{\Omega}$ equivalent
to the distance to the boundary and we prove that the Bergman projection
$P_{\omega}$ of the Hilbert space $L^{2}\left(\Omega,\omega\right)$
maps continuously the spaces $L^{p}\left(\Omega,\delta_{\partial\Omega}^{\beta}\right)$,
$p\in\left]1,+\infty\right[$, $0<\beta+1\leq r+1$ into themselves
and the lipschitz spaces $\Lambda_{\alpha}(\Omega)$, $0<\alpha\leq\nicefrac{1}{m}$,
into themselves.

This result is obtained comparing the operators $P_{\omega_{0}}$
and $P_{\omega}$ with the method described in \cite{CPDY}. To do
it we need to have weighted $L^{p}\left(\Omega,\delta_{\Omega}^{\gamma}\right)$
estimates with appropriate gains on the index $p$ and on the power
$\gamma$ for solution of the $\overline{\partial}$-equation.

This is done, with sharp estimates, for a general lineally convex
domain of finite type using the method introduced in \cite{CDMb},
which overcomes the fact that the Diederich-Fornaess support function
is only locally defined and that it is not possible do make a division
with good estimates in non convex domains.

Our results extend the results (without weights) obtained for convex
domains of finite type by A. Cumenge in \cite{Cumenge-estimates-holder}
and \cite{Cumenge-Navanlinna-convex} and B. Fisher in \cite{MR1815835}
(see also T. Hefer \cite{Hef02}).

\section{Notations and main results}

Throughout this paper we will use the following general notations:
\begin{itemize}
\item $\Omega$ is a smoothly bounded lineally convex domain of finite type
$m$ in $\mathbb{C}^{n}$ (see \cite{CDMb} for a precise definition).
\item $\rho$ is a smooth defining function of $\Omega$ such that, for
$\delta_{0}$ sufficiently small, the domains $\Omega_{t}=\left\{ \rho(z)<t\right\} $,
$-\delta_{0}\leq t\leq\delta_{0}$, are all lineally convex of finite
type $m$.
\item $\delta_{\Omega}$ denotes the distance to the boundary of $\Omega$.
\item For any real number $\gamma>-1$, we denote by $L^{p}\left(\Omega,\delta_{\Omega}^{\gamma}\right)$
the $L^{p}$-space on $\Omega$ for the measure $\delta_{\Omega}^{\gamma}(z)d\lambda(z)$, $\lambda$ being the Lebesgue measure.
\end{itemize}

Our first results give sharp $L^{q}\left(\Omega,\delta_{\Omega}^{\gamma'}\right)$
estimates for solutions of the $\overline{\partial}$-equation in
$\Omega$ with data in $L^{p}\left(\Omega,\delta_{\Omega}^{\gamma}\right)$:
\begin{stthm}
\label{thm:d-bar-q-gamma'-p-gamma-lip}Let $N$ be a positive large
integer. let $\gamma$ and $\gamma'$ be two real numbers such that
$\max\left\{ -1,\gamma-\nicefrac{1}{m}\right\} \leq\gamma'\leq\gamma\leq N-2$.
Then there exists a linear operator $T$, depending on $\rho$ and
$N$, such that, for any $\overline{\partial}$-closed $\left(0,r\right)$-form
with coefficients in $L^{p}\left(\Omega,\delta_{\Omega}^{\gamma}\right)$,
$p\in\left[1,+\infty\right]$, $Tf$ is a solution of the equation
$\overline{\partial}(Tf)=f$ satisfying the following estimate:
\begin{enumerate}
\item If $1\leq p<\frac{m(\gamma'+n)+2}{1-m(\gamma-\gamma')}$, $T$ maps
continuously the space of $\overline{\partial}$-closed forms with
coefficients in $L^{p}\left(\Omega,\delta_{\Omega}^{\gamma}\right)$
into the space of forms whose coefficients are in $L^{q}\left(\Omega,\delta_{\Omega}^{\gamma'}\right)$
with $\frac{1}{q}=\frac{1}{p}-\frac{1-m(\gamma-\gamma')}{m(\gamma'+n)+2}$;
\item If $p=m(\gamma+n)+2$, $T$ maps continuously the space of $\overline{\partial}$-closed
forms with coefficients in $L^{p}\left(\Omega,\delta_{\Omega}^{\gamma}\right)$
into the space of forms whose coefficients are in $BMO(\Omega)$;
\item If $p\in\left]m(\gamma+n)+2,+\infty\right]$, then $T$ maps continuously
the space of $\overline{\partial}$-closed forms with coefficients
in $L^{p}\left(\Omega,\delta_{\Omega}^{\gamma}\right)$ into the space
of forms whose coefficients are in the lipschitz space $\Lambda_{\alpha}(\Omega)$
with $\alpha=\frac{1}{m}\left[1-\frac{m(\gamma+n)+2}{p}\right]$.
\end{enumerate}
\end{stthm}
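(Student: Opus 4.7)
The plan is to construct $T$ as an explicit integral operator of Berndtsson--Andersson / Cauchy--Fantappi\`e type,
\[
Tf(z) = \int_{\Omega} K_{N}(z,\zeta)\wedge f(\zeta),
\]
where $K_{N}$ is built from the Diederich--Fornaess local support function of $\Omega$ together with an additional factor $\rho(\zeta)^{N}$ which produces the freedom in the weight exponent $\gamma\le N-2$. Since $\Omega$ is only lineally convex, the support function exists only locally; I would therefore patch local kernels by a partition of unity $(\chi_{k}(\zeta))$ covering a neighbourhood of $b\Omega$, exactly as in \cite{CDMb}, and check that thanks to the large power $N$ the error terms produced by the derivatives $\overline{\partial}\chi_{k}$ still give a well-defined linear operator satisfying $\overline{\partial}Tf = f$ on $\overline{\partial}$-closed forms.

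The next step is to establish sharp pointwise estimates for $K_{N}(z,\zeta)$ in terms of the McNeal anisotropic pseudo-distance $d$ adapted to the finite-type geometry of $b\Omega$, using the local support function bound $|S_{k}(z,\zeta)|\gtrsim \delta_{\Omega}(z)+\delta_{\Omega}(\zeta)+d(z,\zeta)$ together with the McNeal volume estimates for non-isotropic balls. From these I expect to derive the two one-sided integral estimates
\[
\int_{\Omega}|K_{N}(z,\zeta)|\,\delta_{\Omega}(\zeta)^{-s}\,d\lambda(\zeta)\lesssim \delta_{\Omega}(z)^{-s+\frac{1}{m}},\qquad \int_{\Omega}|K_{N}(z,\zeta)|\,\delta_{\Omega}(z)^{-t}\,d\lambda(z)\lesssim \delta_{\Omega}(\zeta)^{-t+\frac{1}{m}},
\]
valid in the ranges of $s,t$ permitted by the hypothesis $\max\{-1,\gamma-1/m\}\le\gamma'\le\gamma\le N-2$. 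The gain $\delta_{\Omega}^{1/m}$ is the expected subelliptic gain for a type-$m$ domain and is what drives both the $L^{p}\to L^{q}$ improvement and the Lipschitz exponent.

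Conclusion (1) then follows from a weighted Schur test applied with test functions of the form $\delta_{\Omega}^{a}$ and $\delta_{\Omega}^{b}$, the exponents $a,b$ being tuned to produce the relation $\tfrac{1}{q}=\tfrac{1}{p}-\tfrac{1-m(\gamma-\gamma')}{m(\gamma'+n)+2}$; interpolation then handles the intermediate range once the endpoint $p=1$ and the dual endpoint $p=m(\gamma+n)+2$ are verified. For the BMO endpoint (2), I would bound the mean oscillation of $Tf$ on each McNeal polydisc $P_{\tau}(z_{0},\varepsilon)$ by splitting $f$ into a piece supported in $2P_{\tau}(z_{0},\varepsilon)$, controlled by the $L^{p'}$-norm of $K_{N}$ on the polydisc via H\"older, and a far piece controlled by a gradient estimate on $K_{N}$. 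Part (3) is obtained from a H\"older-type kernel difference estimate $|K_{N}(z,\zeta)-K_{N}(z',\zeta)|$ via interpolation between $|K_{N}|$ and $|\nabla_{z}K_{N}|$ in the anisotropic pseudo-distance, which in turn controls the Euclidean distance to the power $1/m$, yielding the stated $\alpha=\tfrac{1}{m}\bigl[1-\tfrac{m(\gamma+n)+2}{p}\bigr]$.

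The main obstacle is the lineal-convexity hypothesis itself: because the support function is only local, the patching step of \cite{CDMb} produces correction terms from $\overline{\partial}\chi_{k}$, and the hardest technical point will be to verify that these corrections are uniformly absorbed by the factor $\rho(\zeta)^{N}$ so that the sharp integral estimates on $K_{N}$ survive the gluing in \emph{all} three regimes (integrable, BMO, Lipschitz). A secondary difficulty is re-deriving the one-sided integral bounds above with \emph{sharp} exponents in $s,t$ in the weighted setting; the convex-case arguments of Cumenge, Fischer and Hefer rely on a global support function, whereas here each bound must be localized on the support of $\chi_{k}$ and then reassembled.
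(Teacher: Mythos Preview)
Your proposal has a genuine gap in the construction of the operator $T$. You write $Tf=\int_{\Omega}K_{N}\wedge f$ and claim that the correction terms coming from $\overline{\partial}\chi_{k}$ are ``uniformly absorbed by the factor $\rho(\zeta)^{N}$'' so that this integral alone satisfies $\overline{\partial}Tf=f$. That is not what happens in \cite{CDMb}, and it cannot work: the cut-off produces a residual kernel $P_{N}(z,\zeta)$ and a Koppelman-type identity
\[
f(z)=(-1)^{q+1}\overline{\partial}_{z}\!\left(\int_{\Omega}f\wedge K_{N}^{1}\right)-\int_{\Omega}f\wedge P_{N},
\]
so that $\int_{\Omega}f\wedge K_{N}^{1}$ solves $\overline{\partial}u=f$ only up to the $\overline{\partial}$-closed error $g(z)=\int_{\Omega}f\wedge P_{N}$. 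The role of the large power $N$ is not to make $g$ vanish but to make it \emph{smooth}: for $\gamma\le N$ one has $\lVert g\rVert_{L^{2}_{s}(\Omega)}\lesssim\lVert f\rVert_{L^{1}(\Omega,\delta_{\Omega}^{\gamma})}$ for every $s$. The actual solution operator is
\[
Tf=\int_{\Omega}f\wedge K_{N}^{1}\;-\;\overline{\partial}^{*}\mathcal{N}\!\left(\int_{\Omega}f\wedge P_{N}\right),
\]
where $\overline{\partial}^{*}\mathcal{N}$ is the canonical $\overline{\partial}$-Neumann solution; this step uses Catlin's subelliptic estimates for pseudoconvex domains of finite type and is exactly the point where the finite-type hypothesis is used a second time. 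Without this $\overline{\partial}$-Neumann correction you do not have a solution operator at all, so the rest of the argument has nothing to act on.

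Once the operator is correctly defined, your estimate strategy is close in spirit but differs from the paper's in two places. For part~(1) the paper does not use first-power Schur bounds plus interpolation; it applies directly a Young--Schur lemma of the type in Range's book, which requires the two estimates
\[
\int_{\Omega}\bigl|K_{N}^{1}(z,\zeta)\bigr|^{1+\mu_{0}}\delta_{\Omega}(\zeta)^{-\mu_{0}\gamma-\varepsilon}\,d\lambda(\zeta)\lesssim\delta_{\Omega}(z)^{-\varepsilon},\qquad
\int_{\Omega}\bigl|K_{N}^{1}(z,\zeta)\bigr|^{1+\mu_{0}}\frac{\delta_{\Omega}(z)^{\gamma'-\varepsilon}}{\delta_{\Omega}(\zeta)^{(1+\mu_{0})\gamma}}\,d\lambda(z)\lesssim\delta_{\Omega}(\zeta)^{-\varepsilon},
\]
with $\mu_{0}$ chosen so that $1/q=1/p-\mu_{0}/(1+\mu_{0})$; this gives the sharp $L^{p}\to L^{q}$ gain in one stroke, without an interpolation step. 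For parts~(2) and~(3) the paper does not estimate mean oscillations or kernel differences directly; it proves the single pointwise gradient bound
\[
\left|\nabla_{z}\!\int_{\Omega}f\wedge K_{N}^{1}\right|\lesssim\delta_{\Omega}(z)^{\alpha-1},\qquad \alpha=\tfrac{1}{m}\bigl[1-\tfrac{m(\gamma+n)+2}{p}\bigr],
\]
via H\"older's inequality and an estimate of $\int_{\Omega}|\nabla_{z}K_{N}^{1}|^{p'}\delta_{\Omega}^{-\gamma p'/p}$, and then invokes the Hardy--Littlewood lemma (the case $\alpha=0$ giving BMO). Your proposed routes for (2) and (3) could be made to work, but they are more laborious than this gradient argument.
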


Note that, if $\gamma'<\gamma$, then $\frac{m(\gamma'+n)+2}{1-m(\gamma-\gamma')}>m(\gamma+n)+2$,
and (3) is sharper than (1). Moreover, without weights, these estimates
are known to be sharp (see \cite{CKM93}).

\medskip{}

The two next propositions, which are immediate corollaries of the
theorem, will be used in the last section:
\begin{spprop}
\label{prop:d-bar-gain-weight}There exists a constant $\varepsilon_{0}>0$
such that, for all large integer $N$ and all $-1<\gamma\leq N-2$,
there exists a linear operator $T$ solving the $\overline{\partial}$-equation
in $\Omega$ such that, for all $p\in\left[1,+\infty\right[$, there
exists a constant $C_{N,p}>0$ such that for all $\overline{\partial}$-closed
$\left(0,r\right)$-form $f$, $1\leq r\leq n-1$, on $\overline{\Omega}$,
we have
\[
\int_{\Omega}\left|Tf\right|^{p}\delta_{\Omega}^{\gamma}d\lambda\leq C_{N,p}\int_{\Omega}\left|f\right|^{p}\delta_{\Omega}^{\gamma+\varepsilon_{0}}d\lambda.
\]

\end{spprop}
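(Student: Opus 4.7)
The plan is to read off the conclusion of part~(1) of the preceding theorem in the boundary case in which the $L^{p}$ index is preserved while the weight exponent is strictly decreased. Nothing new has to be proved; only the parameters have to be chosen correctly.

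Specifically, I would set $\varepsilon_{0}=1/m$. Given a large integer $N$ and $-1<\gamma\leq N-2$, I would invoke the theorem with its parameter ``$N$'' replaced by $N+1$, with the theorem's input exponent taken to be $\gamma+\varepsilon_{0}$, and with the theorem's output exponent ``$\gamma'$'' taken equal to $\gamma$. The admissibility chain
\[
\max\bigl\{-1,\,(\gamma+\varepsilon_{0})-\tfrac{1}{m}\bigr\}=\max\{-1,\gamma\}=\gamma\leq\gamma+\tfrac{1}{m}\leq(N+1)-2
\]
is satisfied because $\gamma>-1$ and $\gamma\leq N-2$. With these choices one has $m\bigl((\gamma+\varepsilon_{0})-\gamma\bigr)=1$, so the fraction $\dfrac{1-m(\gamma_{\mathrm{in}}-\gamma_{\mathrm{out}})}{m(\gamma_{\mathrm{out}}+n)+2}$ appearing in part~(1) of the theorem vanishes. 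This makes the admissibility condition on $p$ degenerate to $1\leq p<+\infty$ and forces $1/q=1/p$, i.e.\ $q=p$. The operator $T$ furnished by the theorem therefore sends $\overline{\partial}$-closed forms in $L^{p}(\Omega,\delta_{\Omega}^{\gamma+\varepsilon_{0}})$ continuously into $L^{p}(\Omega,\delta_{\Omega}^{\gamma})$, which is exactly the desired inequality, and $T$ depends only on $\rho$ and on $N$ (through $N+1$) as required.

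The only subtlety is the minor bookkeeping point that the shifted input exponent $\gamma+\varepsilon_{0}$ may exceed $N-2$, which is why I invoke the theorem with $N+1$ in place of $N$; the constant $C_{N,p}$ then comes from the theorem applied at this shifted level. Beyond this, there is no substantive obstacle: the proposition is nothing but the diagonal ``Poincar\'e--Sobolev'' endpoint $\gamma_{\mathrm{in}}-\gamma_{\mathrm{out}}=1/m$ of the theorem, in which the gain in integrability is exchanged for a gain of $1/m$ in the weight exponent while keeping $p$ fixed.
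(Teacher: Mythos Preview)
Your argument is correct and is exactly what the paper intends: the authors explicitly call this proposition an ``immediate corollary'' of the preceding theorem and give no separate proof, so your parameter-matching is precisely the missing line. The only cosmetic point is that taking $\varepsilon_{0}=1/m$ lands you on the boundary case where the denominator $1-m(\gamma-\gamma')$ vanishes; this is harmless (the admissibility condition on $p$ reads $1\leq p<+\infty$ by the obvious convention, and the proof via the Schur lemma goes through with $s=1$), but if you prefer to stay strictly inside the stated range you can equally well take any $0<\varepsilon_{0}<1/m$, obtain $q>p$, and then use the trivial inclusion $L^{q}(\Omega,\delta_{\Omega}^{\gamma})\subset L^{p}(\Omega,\delta_{\Omega}^{\gamma})$ on the bounded domain.
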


\begin{spprop}
\label{prop:d-bar-gain-exponent}There exist a linear operator $T$
solving the $\overline{\partial}$-equation in $\Omega$ and a constant
$\varepsilon_{0}>0$ such that, for all $-1<\gamma\leq N-2$ and all
$p\in\left[1,+\infty\right[$, there exists a constant $C_{N,p}>0$
such that for all $\overline{\partial}$-closed $\left(0,r\right)$-form
$f$, $1\leq r\leq n-1$, we have
\[
\int_{\Omega}\left|Tf\right|^{p+\varepsilon_{0}}\delta_{\Omega}^{\gamma}d\lambda\leq C_{N,p}\int_{\Omega}\left|f\right|^{p}\delta_{\Omega}^{\gamma}d\lambda.
\]

\end{spprop}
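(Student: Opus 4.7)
The plan is to deduce Proposition~\ref{prop:d-bar-gain-exponent} directly from Theorem~\ref{thm:d-bar-q-gamma'-p-gamma-lip} by applying it with $\gamma' = \gamma$, so that the weight is the same on both sides, and then exploiting the fact that $\delta_{\Omega}^{\gamma}\,d\lambda$ is a finite measure on the bounded domain $\Omega$ (since $\gamma > -1$). Write $A(\gamma) := m(\gamma+n)+2$ and, for the fixed large integer $N$, set $A_{N} := m(N+n-2)+2$, so that $A(\gamma) \leq A_{N}$ uniformly on $(-1,N-2]$. The operator $T$ will be exactly the one delivered by the theorem; it remains only to show the claimed integral inequality with a universal $\varepsilon_{0}$.

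For the subcritical range $1 \leq p < A(\gamma)$, part~(1) of Theorem~\ref{thm:d-bar-q-gamma'-p-gamma-lip} with $\gamma' = \gamma$ yields $Tf \in L^{q}(\Omega,\delta_{\Omega}^{\gamma})$ with $1/q = 1/p - 1/A(\gamma)$, i.e.\ $q = pA(\gamma)/(A(\gamma)-p) > p$, and the bound $\|Tf\|_{L^{q}(\delta_{\Omega}^{\gamma})} \leq C_{N,p}\|f\|_{L^{p}(\delta_{\Omega}^{\gamma})}$. Because $\delta_{\Omega}^{\gamma}\,d\lambda$ is a finite measure, Hölder's inequality embeds $L^{q}$ into $L^{s}$ for every $s \leq q$ with a constant depending on the measure of $\Omega$ alone. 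A short computation shows that $p + \varepsilon_{0} \leq q$ is equivalent to $\varepsilon_{0} \leq p^{2}/(A(\gamma) - p)$, and that the right-hand side is increasing in $p$ on $[1,A(\gamma))$, so its infimum over all admissible pairs $(p,\gamma)$ equals $1/(A_{N}-1)$, attained at $p = 1$, $\gamma = N-2$. Taking $\varepsilon_{0} := 1/(A_{N}-1)$ therefore makes $p + \varepsilon_{0} \leq q$ hold uniformly, and combining with the Hölder embedding and raising to the $(p+\varepsilon_{0})$-th power delivers the required inequality.

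For the supercritical range $p \geq A(\gamma)$ the bound is immediate from parts~(2)--(3) of the theorem: in the Hölder regime $p > A(\gamma)$, $Tf \in \Lambda_{\alpha}(\Omega)$ is bounded on $\overline{\Omega}$, so the $L^{p+\varepsilon_{0}}(\delta_{\Omega}^{\gamma})$-norm is controlled by $\|Tf\|_{\infty}$ which is in turn controlled by $\|f\|_{L^{p}(\delta_{\Omega}^{\gamma})}$; at the endpoint $p = A(\gamma)$, the John--Nirenberg inequality on the bounded smooth domain $\Omega$ embeds $BMO(\Omega)$ into every $L^{s}(\Omega)$ with $s < \infty$, giving the same conclusion. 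The only technical point in the whole argument is the uniformity of $\varepsilon_{0}$ in $(p,\gamma)$, handled by the worst-case computation above; beyond this, nothing new is required over Theorem~\ref{thm:d-bar-q-gamma'-p-gamma-lip}.
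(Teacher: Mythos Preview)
Your argument is correct and is exactly the approach the paper intends: the paper states that Propositions~\ref{prop:d-bar-gain-weight} and~\ref{prop:d-bar-gain-exponent} ``are immediate corollaries of the theorem'' (Theorem~\ref{thm:d-bar-q-gamma'-p-gamma-lip}) and gives no further details, so your write-up is in fact a careful elaboration of the omitted deduction. One minor remark: the H\"older embedding constant you invoke depends on the total mass $\int_{\Omega}\delta_{\Omega}^{\gamma}\,d\lambda$, hence on $\gamma$, so your final constant is really $C_{N,p,\gamma}$; this is harmless for the application in Section~4 (where $\gamma=r$ is fixed) and is almost certainly just a notational imprecision in the paper's statement.
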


\medskip{}

Our last estimate for solutions of the $\overline{\partial}$-equation
is a generalization to lineally convex domains of an estimate obtained
by A. Cumenge (\cite{Cumenge-Navanlinna-convex}) for convex domains
of finite type:
\begin{stthm}
\label{thm:d-bar-for-Nev}For all $\alpha>0$ there exists a constant
$C>0$ such that, for all smooth $\overline{\partial}$-closed $\left(0,r\right)$-form
$f$, $1\leq r\leq n-1$, on $\overline{\Omega}$, there exists a
solution of the equation $\overline{\partial}u=f$, continuous on
$\overline{\Omega}$ such that
\[
\int_{\Omega}\left|u\right|\delta_{\Omega}^{\alpha-1}d\lambda\leq C\frac{1}{\alpha}\int_{\Omega}\left\Vert f\right\Vert _{k}\delta_{\Omega}^{\alpha}d\lambda,
\]
where the norm $\left\Vert f\right\Vert _{k}$ was introduced in \cite{Bruna-Charp-Dupain-Annals}
(see \cite{CDMb} for details, the definition is recalled in \secref{Proof-of-thm-est-d-bar-Nev}).
\end{stthm}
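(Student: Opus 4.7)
The proof will follow the Cumenge--Bruna--Charpentier--Dupain template: use the explicit integral $\bar\partial$-solving operator $T$ from \cite{CDMb} (which replaces the globally defined Berndtsson--Andersson kernel used in the convex case by the local support functions glued together via the procedure of \cite{CDMb}), then apply Fubini and estimate the dual kernel integral. Concretely, for $N$ chosen large enough depending only on $n$ and $m$, the operator $T$ has a kernel $K_N(z,\zeta)$ satisfying the pointwise bound
\[
|Tf(z)| \;\le\; C\int_{\Omega} \|f(\zeta)\|_{k}\, |K_N(z,\zeta)|\, d\lambda(\zeta),
\]
where $\|\cdot\|_k$ weights the anisotropic components of $f$ exactly so as to match the anisotropy of $K_N$ in the complex tangential versus normal directions near $\partial\Omega$.

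\textbf{Main reduction.} Multiplying by $\delta_\Omega(z)^{\alpha-1}$ and applying Fubini, the theorem reduces to proving the dual kernel estimate
\[
I(\zeta) \;:=\; \int_{\Omega} |K_N(z,\zeta)|\, \delta_\Omega(z)^{\alpha-1}\, d\lambda(z) \;\le\; \frac{C}{\alpha}\, \delta_\Omega(\zeta)^{\alpha}
\]
uniformly in $\zeta\in\Omega$, with $C$ independent of $\alpha>0$. Once this is established, combining with the pointwise bound yields the theorem. Continuity of $u=Tf$ on $\overline\Omega$ for smooth $f$ follows from the standard regularity of the kernel representation used in \cite{CDMb} (the form $f$ being smooth up to the boundary).

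\textbf{Estimate of $I(\zeta)$.} To bound $I(\zeta)$ I would use the sharp upper estimate of $|K_N(z,\zeta)|$ in terms of the anisotropic McNeal/Bruna--Charpentier--Dupain pseudoballs $B_c(\zeta,r)$ adapted to $\partial\Omega_t$, as in \cite{CDMb}. After slicing $\Omega$ by the level sets of $\rho$, the integral in the normal direction reduces, up to constants, to $\int_0^{\delta_0} t^{\alpha-1}\,\psi(t,\zeta)\, dt$ where $\psi(t,\zeta)$ is the tangential integral of $|K_N|$ at height $t$. The geometry of the kernel, combined with the volume estimates for the pseudoballs and the choice of $N$ large, gives $\psi(t,\zeta)\le C$ uniformly. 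Then the remaining integral yields
\[
\int_0^{\delta_\Omega(\zeta)} t^{\alpha-1}\,dt \;+\; \delta_\Omega(\zeta)^{?}\!\!\int_{\delta_\Omega(\zeta)}^{\delta_0} t^{\alpha-1-\eta}\,dt
\;\le\; \frac{C}{\alpha}\,\delta_\Omega(\zeta)^{\alpha},
\]
the factor $1/\alpha$ arising precisely from the integration of $t^{\alpha-1}$ on $(0,\delta_\Omega(\zeta))$ and absorbing the tail.

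\textbf{Main obstacle.} The delicate point is obtaining a bound on $\psi(t,\zeta)$ that is \emph{uniform in $t$} and that produces no extra factor depending on $\alpha$; equivalently, one must verify that the anisotropic weights in the definition of $\|\cdot\|_k$ compensate \emph{exactly} for the growth of $|K_N|$ in the complex tangential directions once integration against $\delta_\Omega^{\alpha-1}$ is performed. In the convex case this is handled by the Diederich--Fornaess support function globally; in the lineally convex setting one only has local support functions and must rely on the gluing construction of \cite{CDMb}, so the technical heart of the proof is checking that this gluing still produces a kernel for which the dual estimate above holds with the desired $1/\alpha$ dependence. Once $N$ is chosen sufficiently large (so that tangential decay of $K_N$ is sharp enough to make the tail integral over $(\delta_\Omega(\zeta),\delta_0)$ negligible), the remaining estimates are parallel to those in \cite{Cumenge-Navanlinna-convex}.
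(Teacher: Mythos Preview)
Your overall architecture---Fubini plus a dual kernel estimate---is exactly the paper's, and your reduction to a bound of the form $\int_\Omega |K_N^1(z,\zeta)|\,\delta_\Omega(z)^{\alpha-1}\,d\lambda(z)\le\frac{C}{\alpha}\delta_\Omega(\zeta)^\alpha$ is legitimate (since $|f|\lesssim\|f\|_k$, the scalar factorization you write is valid, and in fact slightly coarser than what the paper does). One small point you gloss over: the operator $T$ is not given by the kernel alone; it is $T_K f+\bar\partial^*\mathcal{N}(\int f\wedge P_N)$, and the second piece is handled separately via Sobolev regularity of the $\bar\partial$-Neumann problem.

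Where your sketch diverges from the paper, and where it has a real gap, is the decomposition used to bound $I(\zeta)$. You slice by level sets $\{|\rho(z)|=t\}$ and assert that the tangential integral $\psi(t,\zeta)$ is uniformly bounded. But a uniform bound $\psi\le C$ on $(0,\delta_0)$ would give only $\int_0^{\delta_0}Ct^{\alpha-1}\,dt=\frac{C\delta_0^\alpha}{\alpha}$, not $\frac{C}{\alpha}\delta_\Omega(\zeta)^\alpha$; you try to repair this with a tail term carrying an unspecified exponent ``?'', which is precisely the missing content. The kernel's decay is governed by the anisotropic pseudodistance $d(z,\zeta)$, not by $|\rho(z)|$, so on each level set you would still have to carry out a tangential dyadic-in-pseudodistance decomposition to extract the $\delta_\Omega(\zeta)^\alpha$ factor---doubling the work.

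The paper avoids this by decomposing directly into dyadic pseudodistance shells $Q_i(\zeta)=P\bigl(\zeta,2^i\delta_\Omega(\zeta)\bigr)\setminus P\bigl(\zeta,2^{i-1}\delta_\Omega(\zeta)\bigr)$ and, on each shell, working in the $(\zeta,2^i\delta_\Omega(\zeta))$-extremal basis. It also does \emph{not} pass to a scalar kernel: it estimates the wedge $K_N^1\wedge d\bar\zeta_l$ term by term, which produces an explicit factor $\tau_l(\zeta,2^i\delta_\Omega(\zeta))$ in the numerator matching $\|d\bar\zeta_l\|_k\simeq\tau_l/\delta_\Omega(\zeta)$. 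Combining the pointwise bound on $|K_N^1\wedge d\bar\zeta_l|$ with the elementary estimate $\int_{P(\zeta,\delta)}\delta_\Omega^{\alpha-1}(z)\,|z-\zeta|^{-1}\,d\lambda(z)\lesssim\frac{\delta^{\alpha-1}}{\alpha}\tau_n\prod_{j=1}^{n-1}\tau_j^2$ yields a contribution $\lesssim 2^{-i}\frac{\delta_\Omega(\zeta)^\alpha}{\alpha}\|d\bar\zeta_l\|_k$ on each $Q_i$, summable in $i$. The $1/\alpha$ arises from integrating $\delta_\Omega^{\alpha-1}$ over the \emph{polydisc} $P(\zeta,2^i\delta_\Omega(\zeta))$, not from a one-dimensional $t$-integral over $(0,\delta_\Omega(\zeta))$ as in your outline.
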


Note that the estimate given by \thmref{d-bar-q-gamma'-p-gamma-lip}
when $p=q=1$ (and then $\gamma'=\gamma-\nicefrac{1}{m}$) is weaker
than the one given above.

An immediate application of this last estimate is the characterization
of the zero sets of the weighted Nevanlinna classes (called Nevanlinna-Djrbachian
classes in \cite{Cumenge-Navanlinna-convex}) obtained by A. Cumenge
for convex domains:
\begin{stthm}
A divisor $\mathcal{D}$ in $\Omega$ can be defined by a holomorphic
function satisfying $\int_{\Omega}\ln^{+}\left|f\right|\delta_{\Omega}^{\alpha-1}d\lambda<+\infty$,
$\alpha>0$, if and only if it satisfy the generalized Blaschke condition
$\int_{\mathcal{D}}\delta^{\alpha+1}d\lambda_{2n-2}<+\infty$.
\end{stthm}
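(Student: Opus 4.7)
The plan is to handle the two implications separately: the necessity is a standard Lelong--Poincar\'e computation, while the sufficiency is where \thmref{d-bar-for-Nev} does the real work, following the scheme of A. Cumenge \cite{Cumenge-Navanlinna-convex} and adapting it to the lineally convex setting via the covering-and-patching technology used in \cite{CDMb}.

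For necessity, assume $f$ is holomorphic with $\{f=0\}=\mathcal{D}$ and $\int_{\Omega}\log^{+}\!|f|\,\delta_{\Omega}^{\alpha-1}d\lambda<\infty$. The Poincar\'e--Lelong formula gives $dd^{c}\!\log|f|=c_{n}[\mathcal{D}]$ as $(1,1)$-currents on $\Omega$. I would pair this identity with the smooth test form $\chi_{\alpha}(-\rho)\,dd^{c}|z|^{2}\wedge\beta_{n-2}$, where $\beta_{k}=(dd^{c}|z|^{2})^{k}/k!$ and $\chi_{\alpha}(t)\asymp t^{\alpha+1}$ is smoothed near $t=0$ if needed. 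Two applications of Stokes transfer both $d^{c}$ derivatives from $\log|f|$ onto the weight; the boundary integrals vanish because $\chi_{\alpha}$ vanishes to order $\alpha+1>1$ on $\partial\Omega$. The interior factor $\chi_{\alpha}''(-\rho)$ is comparable to $\delta_{\Omega}^{\alpha-1}$, and after separating $\log|f|=\log^{+}\!|f|-\log^{-}\!|f|$ and using that $\log^{-}\!|f|\ge 0$, the resulting inequality reads
\[
\int_{\mathcal{D}}\delta_{\Omega}^{\alpha+1}\,d\lambda_{2n-2}\le C\int_{\Omega}\log^{+}\!|f|\,\delta_{\Omega}^{\alpha-1}\,d\lambda+C',
\]
which is the generalised Blaschke condition.

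For sufficiency, assume the Blaschke condition holds and build a canonical PSH potential of $[\mathcal{D}]$,
\[
V(z)=\int_{\mathcal{D}}\log|G(z,w)|\,d\lambda_{2n-2}(w),
\]
whose kernel $G$ is assembled from the local support functions of \cite{CDMb}. In contrast with the convex case, globalising the patched kernel produces a defect: one obtains $V$ together with a globally defined $\bar\partial$-closed $(0,1)$-form $\beta$ such that $dd^{c}V-c_{n}[\mathcal{D}]$ is expressed as an exact term built out of $\beta$, with $\beta$ controlled by the Blaschke mass in the intrinsic geometry of \cite{Bruna-Charp-Dupain-Annals}, namely $\int_{\Omega}\|\beta\|_{k}\,\delta_{\Omega}^{\alpha}\,d\lambda\lesssim \int_{\mathcal{D}}\delta_{\Omega}^{\alpha+1}\,d\lambda_{2n-2}$. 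Applying \thmref{d-bar-for-Nev} to $\beta$ yields a continuous solution $u$ on $\overline{\Omega}$ of $\bar\partial u=\beta$ satisfying $\int_{\Omega}|u|\,\delta_{\Omega}^{\alpha-1}\,d\lambda<\infty$. The correction $W:=V-\mathrm{Re}(u)$ is then a PSH function with $dd^{c}W=c_{n}[\mathcal{D}]$ and $W^{+}\in L^{1}(\Omega,\delta_{\Omega}^{\alpha-1})$; locally $W=\log|g_{j}|+H_{j}$ for a defining function $g_{j}$ of $\mathcal{D}$ on $U_{j}$ and $H_{j}$ smooth pluriharmonic. Since $\Omega$ is Stein, $H^{1}(\Omega,\mathcal{O})=0$ allows the local factors $g_{j}e^{\Phi_{j}}$ (with $\Phi_{j}$ a holomorphic extension of $H_{j}$, matched across overlaps via the splitting of the transition cocycle $\log(g_{j}/g_{k})$) to glue into a global holomorphic $F$ with $\{F=0\}=\mathcal{D}$ and $\log|F|=W$. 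The Nevanlinna bound for $F$ follows from $W^{+}\in L^{1}(\Omega,\delta_{\Omega}^{\alpha-1})$.

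The main obstacle is the construction of the potential $V$ and the correction form $\beta$ at the right level of sharpness. In the convex case one has a global holomorphic support function, so both pieces are fairly direct, but in the lineally convex setting the support function is only local, so $V$ and $\beta$ must be assembled by the covering-and-patching machinery of \cite{CDMb}, and one has to verify that the patching errors remain controlled by the Blaschke mass in the weighted $\|\cdot\|_{k}$-norm required by \thmref{d-bar-for-Nev}. Once this is in place, the $\bar\partial$-estimate converts the Blaschke mass into the desired Nevanlinna defining function in one stroke.
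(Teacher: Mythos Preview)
The paper does not actually prove this statement; immediately after stating it, the authors declare that the deduction from \thmref{d-bar-for-Nev} is ``very classical'' and give no details, the scheme being that of \cite{Cumenge-Navanlinna-convex} for convex domains and of \cite{CDMb} for the lineally convex unweighted case. Your sketch is exactly this classical route, with \thmref{d-bar-for-Nev} correctly singled out as the single new analytic input; the one cosmetic point is that in \cite{CDMb} the $(0,1)$-form $\beta$ and the key estimate $\int_\Omega\|\beta\|_k\,\delta_\Omega^{\alpha}\,d\lambda\lesssim\int_{\mathcal D}\delta_\Omega^{\alpha+1}\,d\lambda_{2n-2}$ are obtained from an explicit Cauchy--Fantappi\`e representation of the current $[\mathcal D]$ rather than from a potential written as $\int_{\mathcal D}\log|G(z,w)|\,d\lambda_{2n-2}(w)$, but your identification of this step as the ``main obstacle'' and your deferral to the machinery of \cite{CDMb} for it is entirely appropriate.
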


As the proof of such result using \thmref{d-bar-for-Nev} is very
classical we will not give any detail on it in this paper.

\medskip{}

The two propositions \ref{prop:d-bar-gain-weight} and \ref{prop:d-bar-gain-exponent}
will be used to generalize some estimates obtained for weighted Bergman
projections of convex domains of finite type in \cite{CDM}:
\begin{stthm}
\label{thm:estimates-bergman}Let $D$ be smoothly bounded convex
domain of finite type in $\mathbb{C}^{n}$. Let $\chi$ be any $\mathcal{C}^{2}$
non negative function in $\overline{D}$ which is equivalent to the
distance $\delta_{D}$ to the boundary of $D$ and let $\eta$ be
a strictly positive $\mathcal{C}^{1}$ function on $\overline{D}$.
Let $P_{\omega}$ be the (weighted) Bergman projection of the Hilbert
space $L^{2}\left(D,\omega\right)$ where $\omega=\eta\chi^{r}$ with
$r$ a non negative rational number.Then:
\begin{enumerate}
\item For $p\in\left]1,+\infty\right[$ and $-1<\beta\leq r$, $P_{\omega}$
maps continuously $L^{p}\left(D,\delta_{D}^{\beta}\right)$ into itself.
\item For $0<\alpha\leq\nicefrac{1}{m}$ $P_{\omega}$ maps continuously
the Lipschitz space $\Lambda_{\alpha}(D)$ into itself.
\end{enumerate}
\end{stthm}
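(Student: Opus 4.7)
The plan is to compare $P_{\omega}$ with the reference projection $P_{\omega_{0}}$ from \cite{CDM}, where $\omega_{0}=(-\rho_{0})^{r}$ for $\rho_{0}=g^{4}e^{1-1/g}-1$. Since $\chi$, $-\rho_{0}$ and $\delta_{D}$ are pairwise equivalent and $\eta$ is bounded above and below, the ratio $\phi=\omega/\omega_{0}=\eta\bigl(\chi/(-\rho_{0})\bigr)^{r}$ is a bounded strictly positive $\mathcal{C}^{1}$ function on $\overline{D}$, so $\overline{\partial}\phi$ is bounded. By \cite[Theorem~2.1]{CDM}, $P_{\omega_{0}}$ is continuous on $L^{p}(D,\delta_{D}^{\beta})$ for every $p\ge 1$ and every $-1<\beta\le p(r+1)-1$ (in particular for every $\beta\in(-1,r]$), and on $\Lambda_{\alpha}(D)$ for every $0<\alpha\le 1/m$.

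Writing the orthogonality defining $P_{\omega}$ with respect to the measure $\phi\,\omega_{0}$ yields the basic comparison $P_{\omega_{0}}(\phi\,P_{\omega}f)=P_{\omega_{0}}(\phi f)$. Setting $h=P_{\omega}f$ and using $\overline{\partial}h=0$, we have $\overline{\partial}(\phi h)=\overline{\partial}\phi\cdot h$, hence for any $\overline{\partial}$-solution operator $T$ the function $\phi h-T(\overline{\partial}\phi\cdot h)$ is holomorphic and thus fixed by $P_{\omega_{0}}$. Combining these two facts gives the key formula
\[
\phi\,P_{\omega}f=P_{\omega_{0}}(\phi f)+T(\overline{\partial}\phi\cdot P_{\omega}f)-P_{\omega_{0}}\bigl(T(\overline{\partial}\phi\cdot P_{\omega}f)\bigr). \qquad (\star)
\]
The first term on the right is controlled in the desired space by $\|f\|$ via \cite{CDM} (since $\phi\in L^{\infty}$), while the remaining two terms involve $P_{\omega}f$ only through a $\overline{\partial}$-solution of a datum of size comparable to $|P_{\omega}f|$. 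Propositions~\ref{prop:d-bar-gain-weight} and~\ref{prop:d-bar-gain-exponent} supply $\overline{\partial}$-solution operators $T$ producing a gain $\varepsilon_{0}>0$ either in the weight exponent or in the integrability index, and $P_{\omega_{0}}$ preserves these gains thanks to \cite{CDM}.

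The proof is closed by bootstrapping. The initial step is the trivial boundedness of $P_{\omega}$ on $L^{2}(D,\omega)=L^{2}(D,\delta_{D}^{r})$. Using the $T$ of Proposition~\ref{prop:d-bar-gain-weight} in $(\star)$ yields
\[
\|P_{\omega}f\|_{L^{2}(\delta_{D}^{\beta})}\le C\bigl(\|f\|_{L^{2}(\delta_{D}^{\beta})}+\|P_{\omega}f\|_{L^{2}(\delta_{D}^{\beta+\varepsilon_{0}})}\bigr),
\]
so starting from $\beta=r$ we can lower $\beta$ by $\varepsilon_{0}$ per step and reach any $\beta\in(-1,r]$ after finitely many iterations. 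Next, using the $T$ of Proposition~\ref{prop:d-bar-gain-exponent} in $(\star)$ gives an analogous inequality which raises the integrability index by $\varepsilon_{0}$ per step, covering every $p\in[2,+\infty)$ in finitely many iterations. The range $1<p<2$ is obtained by duality from the self-adjointness of $P_{\omega}$ in $L^{2}(D,\omega)$ together with the preceding $p\ge 2$ estimates, after identifying the Lebesgue-dual of $L^{p}(D,\delta_{D}^{\beta})$ with an appropriate weighted $L^{p'}$ space. Part~(2) is handled by the same scheme with $L^{p}(D,\delta_{D}^{\beta})$ replaced by $\Lambda_{\alpha}(D)$, using the $p=+\infty$ endpoint of Theorem~\ref{thm:d-bar-q-gamma'-p-gamma-lip}(3) to supply $T\colon L^{\infty}\to\Lambda_{\alpha}$, together with the Lipschitz continuity of $P_{\omega_{0}}$ from \cite{CDM}.

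The main obstacle is to close the bootstrap rigorously: each application of $(\star)$ reduces the problem to a weaker instance but does not shrink the constants, so a Neumann-series argument is not available. The iteration must instead be organised so that every step rests only on the previously settled (one $\varepsilon_{0}$ easier) case, with the finiteness of the number of iterations guaranteeing a uniform final constant. The passage from $p\ge 2$ to $1<p<2$ via duality further calls for careful bookkeeping of the weighted dual spaces and of the admissible range of weights treated in \cite{CDM}.
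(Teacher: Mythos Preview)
Your argument is essentially the paper's own proof. The comparison identity $(\star)$ is exactly the formula the paper quotes from \cite{CPDY}, and the bootstrap using \propref{d-bar-gain-weight} and \propref{d-bar-gain-exponent}, together with the self-adjointness of $P_{\omega}$ for the range $1<p<2$, is precisely the scheme the paper follows. The only organisational difference is the order: the paper first raises $p$ at $\beta=r$ and only afterwards lowers $\beta$, whereas you lower $\beta$ first at $p=2$ and then raise $p$; both orderings close.

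There is one small inaccuracy in your sketch of part~(2). You propose to use the $p=+\infty$ endpoint of \thmref{d-bar-q-gamma'-p-gamma-lip}(3), i.e.\ a map $T\colon L^{\infty}\to\Lambda_{\alpha}$; but at that stage you do not know $P_{\omega}u\in L^{\infty}$, only (from part~(1)) that $P_{\omega}u\in L^{p}(D,\delta_{D}^{r})$ for every finite $p$. The paper closes this by choosing the \emph{finite} $p$ with $\alpha=\tfrac{1}{m}\bigl[1-\tfrac{m(r+n)+2}{p}\bigr]$, invoking part~(1) at that $p$, and then applying \thmref{d-bar-q-gamma'-p-gamma-lip}(3) with $\gamma=r$ in a single step rather than a bootstrap. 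With that correction your argument matches the paper's.
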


This theorem combined with \thmref{d-bar-q-gamma'-p-gamma-lip} extends
to weighted situations the Corollary 1.3 of \cite{Cumenge-estimates-holder}
\begin{cor*}
Under the assumptions of \thmref{estimates-bergman}, the solution
of the equation $\overline{\partial}u=f$ which is orthogonal to holomorphic
functions in $L^{2}(D,\omega)$ where $f$ is a $\left(0,1\right)$-form
$\overline{\partial}$-closed with coefficients in $L^{p}(\Omega,\delta_{\Omega}^{\gamma})$,
$-1<\gamma$, belongs to:
\begin{enumerate}
\item $L^{q}(D,\delta_{D}^{\gamma'})$, with $\frac{1}{q}=\frac{1}{p}-\frac{1-m(\gamma-\gamma')}{m(\gamma'+n)+2}$
and $\max\left\{ -1,\gamma-\nicefrac{1}{m}\right\} <\gamma'\leq\gamma$,
if $\gamma'\leq r$, $1\leq p<\frac{m(\gamma'+n)+2}{1-m(\gamma-\gamma')}$,
and $q>1$;
\item $\Lambda_{\alpha}(D)$, with $\alpha=\frac{1}{m}\left[1-\frac{m(\gamma+n)+2}{p}\right]$,
if $p\in\left]m(\gamma+n)+2,+\infty\right]$.
\end{enumerate}
\end{cor*}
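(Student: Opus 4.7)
The plan is to combine the preceding two theorems. Let $T$ be the linear $\overline{\partial}$-solution operator furnished by \thmref{d-bar-q-gamma'-p-gamma-lip} (applied to $D$, which is lineally convex of finite type since it is convex of finite type), and let $P_{\omega}$ denote the weighted Bergman projection of $L^{2}(D,\omega)$. For a $\overline{\partial}$-closed $(0,1)$-form $f$ with coefficients in $L^{p}(D,\delta_{D}^{\gamma})$, I would propose the candidate
\[
u = Tf - P_{\omega}(Tf).
\]
This gives $\overline{\partial} u = \overline{\partial}(Tf)=f$ automatically, and as soon as $Tf\in L^{2}(D,\omega)$ the very definition of $P_{\omega}$ makes $u$ orthogonal to holomorphic functions in $L^{2}(D,\omega)$, so $u$ is the unique canonical solution.

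In case (1), \thmref{d-bar-q-gamma'-p-gamma-lip}(1) places $Tf$ in $L^{q}(D,\delta_{D}^{\gamma'})$ with the stated $q$. Since $-1<\gamma'\leq r$ and $q>1$, \thmref{estimates-bergman}(1) applies and yields a bounded extension of $P_{\omega}$ on $L^{q}(D,\delta_{D}^{\gamma'})$; hence $P_{\omega}(Tf)\in L^{q}(D,\delta_{D}^{\gamma'})$ and therefore $u\in L^{q}(D,\delta_{D}^{\gamma'})$. In case (2), \thmref{d-bar-q-gamma'-p-gamma-lip}(3) places $Tf$ in $\Lambda_{\alpha}(D)$ with $\alpha=\frac{1}{m}\!\left[1-\frac{m(\gamma+n)+2}{p}\right]\leq\nicefrac{1}{m}$, and \thmref{estimates-bergman}(2) gives $P_{\omega}(Tf)\in\Lambda_{\alpha}(D)$, so $u\in\Lambda_{\alpha}(D)$.

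The only point that seems to require extra care is ensuring that the extension of $P_{\omega}$ provided by \thmref{estimates-bergman} coincides, on the particular element $Tf$, with the $L^{2}(D,\omega)$-orthogonal projection, so that $u$ is truly the canonical solution and not merely \emph{some} representative modulo holomorphic functions. I would handle this by a density argument: approximate $f$ by smooth $\overline{\partial}$-closed forms $f_{k}$ for which $Tf_{k}$ is bounded (hence in $L^{2}(D,\omega)$), write the tautological identity $u_{k}=Tf_{k}-P_{\omega}(Tf_{k})$ directly in $L^{2}(D,\omega)$, and pass to the limit using the continuity estimates supplied by both theorems. The uniqueness of the orthogonal solution identifies the limit with $u$. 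This approximation step is the only mild obstacle; all the substantive analytic content is already contained in the two theorems.
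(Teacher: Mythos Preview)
Your argument is correct and is exactly the combination the paper has in mind: the corollary is stated as an immediate consequence of \thmref{d-bar-q-gamma'-p-gamma-lip} and \thmref{estimates-bergman}, and the formula $u=Tf-P_{\omega}(Tf)$ together with the mapping properties of $T$ and $P_{\omega}$ is the intended proof. Your remark about the approximation step (to identify the bounded extension of $P_{\omega}$ with the actual orthogonal projection on the specific function $Tf$) is a fair point of rigor that the paper leaves implicit; the regularization of $\overline{\partial}$-closed forms invoked at the start of Section~3 handles it just as you describe.
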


\section{Proofs of theorems \ref{thm:d-bar-q-gamma'-p-gamma-lip} and \ref{thm:d-bar-for-Nev}}

First of all by standard regularization procedure, it suffices to
prove theorems \ref{thm:d-bar-q-gamma'-p-gamma-lip}, and \ref{thm:d-bar-for-Nev}
for forms smooth in $\overline{\Omega}$.

To solve the $\overline{\partial}$-equation on a lineally convex
domain of finite type, we use the method introduced in \cite{CDMb}.
We now briefly recall the notations and main results from that work.

If $f$ is a smooth $\left(0,r\right)$-form $\overline{\partial}$-closed,
the following formula was established
\[
f(z)=\left(-1\right)^{q+1}\overline{\partial_{z}}\left(\int_{\Omega}f(\zeta)\wedge K_{N}^{1}(z,\zeta)\right)-\int_{\Omega}f(\zeta)\wedge P_{N}(z,\zeta),
\]
where $K_{N}^{1}$ (resp. $P_{N}$) is the component of a kernel $K_{N}$
(formula (2.7) of \cite{CDMb}) of bi-degree $\left(0,r\right)$ in
$z$ and $\left(n,n-r-1\right)$ in $\zeta$ (resp. $\left(0,r\right)$
in $z$ and $\left(n,n-r\right)$ in $\zeta$) constructed with the
method of \cite{BA82} using the Diederich-Fornaess support function
constructed in \cite{Diederich-Fornaess-Support-Func-lineally-cvx}
(see also Theorem 2.2 of \cite{CDMb}) and the function $G(\xi)=\frac{1}{\xi^{N}}$
with a sufficiently large number $N$ (instead of $G(\xi)=\frac{1}{\xi}$
in formula (2.7) of \cite{CDMb}).

Then, the form $\int_{\Omega}f(\zeta)\wedge P_{N}(z,\zeta)$ is $\overline{\partial}$-closed
and the operator $T$ solving the $\overline{\partial}$-equation
in theorems \ref{thm:d-bar-q-gamma'-p-gamma-lip} and \ref{thm:d-bar-for-Nev}
is defined on smooth forms by
\[
Tf(z)=\int_{\Omega}f(\zeta)\wedge K_{N}^{1}(z,\zeta)-\overline{\partial}^{*}\mathcal{N}\left(\int_{\Omega}f(\zeta)\wedge P_{N}(z,\zeta)\right),
\]
where $\overline{\partial}^{*}\mathcal{N}$ is the canonical solution
of the $\overline{\partial}$-equation derived from the theory of
the $\overline{\partial}$-Neumann problem on pseudoconvex domains
of finite type.

This formula is justified by the fact that, when the coefficients
of $f$ are in $L^{1}\left(\Omega,\delta_{\Omega}^{\gamma}\right)$
($\gamma>-1$) then, given a large integer $s$, if $N$ is chosen
sufficiently large, the coefficients of the form $\int_{\Omega}f(\zeta)\wedge P_{N}(z,\zeta)$
are in the Sobolev space $L_{s}^{2}(\Omega)$. More precisely, it
is clear that lemmas 2.2 and 2.3 of \cite{CDMb} remains true with
weighted estimates depending on the choice of $N$:
\begin{sllem}
For $r\geq1$ and $\gamma\leq N$, all the $z$-derivatives of $P_{N}(z,\zeta)\left(-\rho(\zeta)\right)^{-\gamma}$
are uniformly bounded in $\overline{\Omega}\times\overline{\Omega}$,
and, for each positive integer $s$, there exists a constant $C_{s,N,\gamma}$
such that, if $f$ is $\left(0,r\right)$-form with coefficients in
$L^{1}(\Omega,\delta_{\Omega}^{\gamma})$,
\[
\left\Vert \int_{\Omega}f(\zeta)\wedge P_{N}(z,\zeta)\right\Vert _{L_{s}^{2}(\Omega)}\leq C_{s,N,\gamma}\left\Vert f\right\Vert _{L^{1}(\Omega,\delta_{\Omega}^{\gamma})}.
\]

\end{sllem}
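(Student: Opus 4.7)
The plan is to show, following the proof of Lemma~2.2 of \cite{CDMb} while tracking the extra weight power, that with $G(\xi)=1/\xi^{N}$ the kernel $P_{N}$ factorizes as
\[
P_{N}(z,\zeta)=(-\rho(\zeta))^{N}R_{N}(z,\zeta),
\]
where $R_{N}$ has coefficients smooth on $\overline{\Omega}\times\overline{\Omega}$ together with all their $z$-derivatives. Once this is established, the first statement is immediate: for $\gamma\leq N$, the product $P_{N}(z,\zeta)(-\rho(\zeta))^{-\gamma}=(-\rho(\zeta))^{N-\gamma}R_{N}(z,\zeta)$ together with all its $z$-derivatives is continuous on the compact set $\overline{\Omega}\times\overline{\Omega}$, hence bounded.

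The factorization itself is obtained by unpacking formula~(2.7) of \cite{CDMb} with $G$ replaced by $\xi\mapsto\xi^{-N}$. Because $P_{N}$ carries no $\overline{\partial}_{z}$ derivative, the Diederich--Fornaess support function never produces the diagonal singularity; instead each occurrence of the weight contributes a factor built from $\overline{\partial}_{\zeta}$ of the boundary cut-off piece of the Berndtsson--Andersson section, yielding an explicit $(-\rho(\zeta))^{N}$ prefactor (rather than $(-\rho(\zeta))^{1}$ as in \cite{CDMb} where $G(\xi)=1/\xi$). The smoothness and uniform $z$-regularity of the remainder $R_{N}$ then follow from the smooth dependence of the support function on $z$ in the region where the relevant denominators remain bounded below, exactly as in \cite{CDMb}.

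For the Sobolev estimate I would differentiate under the integral sign: for any multi-index $\alpha$ with $|\alpha|\leq s$, the factorization yields the pointwise bound
\[
\bigl|\partial_{z}^{\alpha}P_{N}(z,\zeta)\bigr|\leq C_{s,N}(-\rho(\zeta))^{N}\leq C_{s,N,\gamma}\,\delta_{\Omega}^{\gamma}(\zeta),
\]
where the second inequality uses $\gamma\leq N$ together with the boundedness of $\Omega$. Consequently
\[
\Bigl|\partial_{z}^{\alpha}\!\int_{\Omega}f(\zeta)\wedge P_{N}(z,\zeta)\Bigr|\leq C_{s,N,\gamma}\left\Vert f\right\Vert _{L^{1}(\Omega,\delta_{\Omega}^{\gamma})},
\]
uniformly in $z\in\Omega$, and integrating the square over the bounded domain $\Omega$ gives the desired $L^{2}_{s}$ bound. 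The main obstacle is the bookkeeping of the boundary vanishing in the factorization of $P_{N}$, but since the construction differs from \cite{CDMb} only by replacing $G(\xi)=1/\xi$ by $G(\xi)=1/\xi^{N}$, the estimates of \cite{CDMb} on the regular factor $R_{N}$ carry over verbatim and only the exponent of $(-\rho(\zeta))$ scales linearly with $N$.
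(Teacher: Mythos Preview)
Your strategy---write $P_{N}=(-\rho(\zeta))^{N}R_{N}$ with $R_{N}$ and all its $z$-derivatives bounded on $\overline{\Omega}\times\overline{\Omega}$, then differentiate under the integral sign---is exactly what the paper intends: the paper gives no argument beyond asserting that Lemmas~2.2 and~2.3 of \cite{CDMb} carry over once one tracks the extra power of $-\rho(\zeta)$ produced by replacing $G(\xi)=1/\xi$ with $G(\xi)=1/\xi^{N}$.

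Your explanation of \emph{why} $R_{N}$ is smooth, however, is backwards. The assertion that ``$P_{N}$ carries no $\overline{\partial}_{z}$ derivative'' is false: $P_{N}$ has bidegree $(0,r)$ in $z$, so for $r\geq1$ it necessarily contains $r$ factors of $d\bar{z}$, all arising from $\overline{\partial}_{z}Q$. The correct mechanism---and the reason the hypothesis $r\geq1$ is essential---is that the Diederich--Fornaess function $S_{0}$ is holomorphic in $z$, so $\overline{\partial}_{z}Q_{i}$ is supported only where the cutoff $\chi$ is not identically~$1$, i.e.\ where $|z-\zeta|$ or $\delta_{\Omega}(\zeta)$ is bounded below by a fixed positive constant. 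On that support the denominator $\frac{1}{K_{0}}S(z,\zeta)+\rho(\zeta)$ is bounded away from zero \emph{uniformly} (not merely by $|\rho(\zeta)|$), together with all its $z$-derivatives; this is what makes $R_{N}$ smooth. Also, the diagonal singularity in $K_{N}$ comes from the Bochner--Martinelli section $s$, not from the support function, so your attribution there is inverted as well. If you actually follow the argument of \cite{CDMb} rather than your paraphrase of it, the proof goes through.
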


As $\Omega$ is assumed to be smooth and of finite type, the regularity
results of the $\overline{\partial}$-Neumann problem (\cite{Kohn-Nirenberg-1965}
and \cite{Catlin-Est.-Sous-ellipt.})
\begin{sllem}
For $r\geq1$ and $-1<\gamma\leq N$, for each positive integer $s$,
if $f$ is a $\overline{\partial}$-closed $\left(0,r\right)$-form
with coefficients in $L^{1}(\Omega,\delta_{\Omega}^{\gamma})$ and
$g=\int_{\Omega}f(\zeta)\wedge P_{N}(z,\zeta)$, then $\overline{\partial}^{*}\mathcal{N}(g)$
is a solution of the equation $\overline{\partial}u=g$ satisfying
$\left\Vert \overline{\partial}^{*}\mathcal{N}(g)\right\Vert _{L_{s}^{2}(\Omega)}\leq C_{s,N,\gamma}\left\Vert f\right\Vert _{L^{1}(\Omega,\delta^{\gamma})}$.
\end{sllem}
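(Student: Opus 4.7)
The plan is to combine the previous lemma with the classical subelliptic regularity theory for the $\overline{\partial}$-Neumann problem on smoothly bounded pseudoconvex domains of finite type. The only non-citation work is verifying that $g$ is $\overline{\partial}$-closed and smooth enough up to the boundary for the Neumann calculus to apply without fuss.

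First I would check that $\overline{\partial} g = 0$. Starting from the integral representation
\[
f(z) = (-1)^{q+1}\overline{\partial}_z\!\left(\int_\Omega f(\zeta)\wedge K_N^1(z,\zeta)\right) - g(z)
\]
recalled above and using $\overline{\partial} f = 0$, applying $\overline{\partial}_z$ to both sides gives $\overline{\partial} g = 0$ in the distributional sense. By the previous lemma, $g \in L^2_s(\Omega)$ for every $s$, hence $g \in \mathcal{C}^\infty(\overline{\Omega})$, and so $\overline{\partial} g = 0$ in the classical sense. In particular $g$ belongs to the domain of the Neumann operator and $\overline{\partial}\,\overline{\partial}^* \mathcal{N} g = g$ in the standard way.

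Next I would invoke the work of Kohn--Nirenberg \cite{Kohn-Nirenberg-1965} and Catlin \cite{Catlin-Est.-Sous-ellipt.}: since $\Omega$ is smoothly bounded, pseudoconvex and of finite type, the $\overline{\partial}$-Neumann operator $\mathcal{N}$ is exactly regular on the whole Sobolev scale (in fact subelliptic with a strictly positive gain depending on the type), so for every integer $s \geq 0$ it maps $L^2_s(\Omega)_{(0,r)}$ continuously into itself. Composition with $\overline{\partial}^*$ then yields a continuous canonical solution operator $\overline{\partial}^*\mathcal{N}\colon L^2_s(\Omega)_{(0,r)} \to L^2_s(\Omega)_{(0,r-1)}$, so there is a constant $C_s$ with
\[
\bigl\|\overline{\partial}^*\mathcal{N} g\bigr\|_{L^2_s(\Omega)} \leq C_s\,\|g\|_{L^2_s(\Omega)}.
\]

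Finally, chaining this bound with the previous lemma yields
\[
\bigl\|\overline{\partial}^*\mathcal{N} g\bigr\|_{L^2_s(\Omega)} \leq C_s\,\|g\|_{L^2_s(\Omega)} \leq C_{s,N,\gamma}\,\|f\|_{L^1(\Omega,\delta_\Omega^{\gamma})},
\]
which is the claimed estimate. There is no real obstacle here: the only mild point is to ensure that for each prescribed $s$ the integer $N$ is taken large enough so that the previous lemma is applicable, a condition already encoded in the hypothesis $\gamma \leq N$.
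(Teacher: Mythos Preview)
Your proposal is correct and follows essentially the same route as the paper: the authors simply state that the lemma follows from the regularity results of the $\overline{\partial}$-Neumann problem due to Kohn--Nirenberg and Catlin, combined with the preceding lemma giving $g\in L_s^2(\Omega)$. You have merely fleshed out the details (the verification that $g$ is $\overline{\partial}$-closed and the explicit chaining of estimates), which the paper leaves implicit.
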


Applying Sobolev lemma we immediately get:
\begin{sllem}
For $r\geq1$, $p\in\left[1,+\infty\right]$ and $-1<\gamma\leq N$,
if $f$ is a $\overline{\partial}$-closed $\left(0,r\right)$-form
with coefficients in $L^{1}(\Omega,\delta_{\Omega}^{\gamma})$ and
$g=\int_{\Omega}f(\zeta)\wedge P_{N}(z,\zeta)$, then $\overline{\partial}^{*}\mathcal{N}(g)$
is a solution of the equation $\overline{\partial}u=g$ satisfying
$\left\Vert \overline{\partial}^{*}\mathcal{N}(g)\right\Vert _{\mathcal{C}^{1}(\overline{\Omega})}\leq C\left\Vert f\right\Vert _{L^{1}\left(\Omega,\delta^{\gamma}\right)}$.
\end{sllem}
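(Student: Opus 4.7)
The plan is to obtain this lemma as an immediate consequence of the $L^2_s$ estimate just established in the preceding lemma, together with the classical Sobolev embedding on the smoothly bounded domain $\Omega$. Since $\Omega\subset\mathbb{C}^n$ has real dimension $2n$, one has $L^2_s(\Omega)\hookrightarrow\mathcal{C}^1(\overline{\Omega})$ whenever $s>n+1$, with an embedding constant depending only on $n$, $s$ and the geometry of $\partial\Omega$.

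Concretely, I would proceed in three short steps. First, fix once and for all a dimensional integer $s_{0}>n+1$ (say $s_{0}=n+2$), independent of $f$, $\gamma$ and $N$. Secondly, apply the preceding lemma with this choice of $s=s_{0}$, which gives
\[
\left\Vert \overline{\partial}^{*}\mathcal{N}(g)\right\Vert _{L_{s_{0}}^{2}(\Omega)}\leq C_{s_{0},N,\gamma}\left\Vert f\right\Vert _{L^{1}(\Omega,\delta_{\Omega}^{\gamma})}.
\]
Thirdly, compose with the Sobolev embedding to transfer this $L^2_{s_{0}}$ bound into a $\mathcal{C}^1(\overline{\Omega})$ bound, absorbing the embedding constant into the final $C$. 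That $\overline{\partial}^{*}\mathcal{N}(g)$ genuinely solves $\overline{\partial}u=g$ is already part of the statement of the preceding lemma and relies on the $\overline{\partial}$-closedness of $g$ that was recorded at the beginning of the section.

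The only mild book-keeping point --- not really an obstacle --- is to make sure that $N$ is taken large enough, relative to $\gamma$ and to the chosen $s_{0}$, so that the kernel $P_{N}$ produces a form $g$ with the required Sobolev regularity. This is precisely the reason for having replaced $G(\xi)=\xi^{-1}$ by $G(\xi)=\xi^{-N}$ in the construction of the kernel, and it is already built into the standing hypothesis $\gamma\leq N$ via the preceding two lemmas. There is thus no substantive analytical difficulty left: the hard work, namely the subelliptic estimates for $\overline{\partial}^{*}\mathcal{N}$ on smooth pseudoconvex domains of finite type, has been absorbed upstream, and the present statement is genuinely a one-line corollary of the $L^2_{s}$-bound combined with a Sobolev embedding chosen for $k=1$.
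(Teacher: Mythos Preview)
Your proposal is correct and matches the paper's approach exactly: the paper introduces this lemma with the single phrase ``Applying Sobolev lemma we immediately get,'' and your three steps simply spell out that one line --- choose $s_{0}>n+1$, invoke the preceding $L^{2}_{s}$ estimate for $\overline{\partial}^{*}\mathcal{N}(g)$, and apply the Sobolev embedding $L^{2}_{s_{0}}(\Omega)\hookrightarrow\mathcal{C}^{1}(\overline{\Omega})$.
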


\medskip{}

Finally the proofs of our theorems are reduced to the proofs of good
estimates for the operator $T_{K}$ defined by
\begin{equation}
T_{K}:\,f\mapsto\int_{\Omega}f(\zeta)\wedge K_{N}^{1}(z,\zeta).\label{eq:operator_K1}
\end{equation}

To do it with some details we need to recall the anisotropic geometry
of $\Omega$ and the basic estimates given in \cite{CDMb}.

For $\zeta$ close to $\partial\Omega$ and $\varepsilon\leq\varepsilon_{0}$,
$\varepsilon_{0}$ small, define, for all unitary vector $v$,
\[
\tau\left(\zeta,v,\varepsilon\right)=\sup\left\{ c\mbox{ such that }\left|\rho\left(\zeta+\lambda v\right)-\rho(\zeta)\right|<\varepsilon,\,\forall\lambda\in\mathbb{C},\,\left|\lambda\right|<c\right\} .
\]

Let $\zeta$ and $\varepsilon$ be fixed. Then, an orthonormal basis
$\left(v_{1},v_{2},\ldots,v_{n}\right)$ is called \emph{$\left(\zeta,\varepsilon\right)$-extremal}
(or $\varepsilon$-\emph{extremal}, or simply \emph{extremal}) if
$v_{1}$ is the complex normal (to $\rho$) at $\zeta$, and, for
$i>1$, $v_{i}$ belongs to the orthogonal space of the vector space
generated by $\left(v_{1},\ldots,v_{i-1}\right)$ and minimizes $\tau\left(\zeta,v,\varepsilon\right)$
in that space. In association to an extremal basis, we denote
\[
\tau(\zeta,v_{i},\varepsilon)=\tau_{i}(\zeta,\varepsilon).
\]

Then we defined polydiscs $AP_{\varepsilon}(\zeta)$ by
\[
AP_{\varepsilon}(\zeta)=\left\{ z=\zeta+\sum_{k=1}^{n}\lambda_{k}v_{k}\mbox{ such that }\left|\lambda_{k}\right|\leq c_{0}A\tau_{k}(\zeta,\varepsilon)\right\} ,
\]
$c_{0}$ being sufficiently small, depending on $\Omega$, $P_{\varepsilon}(\zeta)$
being the corresponding polydisc with $A=1$ and we also define
\[
d(\zeta,z)=\inf\left\{ \varepsilon\mbox{ such that }z\in P_{\varepsilon}(\zeta)\right\} .
\]
The fundamental result here is that $d$ is a pseudo-distance which
means that, $\forall\alpha>0$, there exist constants $c(\alpha)$
and $C(\alpha)$ such that 
\begin{equation}
c(\alpha)P_{\varepsilon}(\zeta)\subset P_{\alpha\varepsilon}(\zeta)\subset C(\alpha)P_{\varepsilon}(\zeta)\mbox{ and }P_{c(\alpha)\varepsilon}(\zeta)\subset\alpha P_{\varepsilon}(\zeta)\subset P_{C(\alpha)\varepsilon}(\zeta).\label{eq:polydiscs-pseudodistance}
\end{equation}

For $\zeta$ close to $\partial\Omega$ and $\varepsilon>0$ small,
the basic properties of this geometry are (see \cite{Conrad_lineally_convex}
and \cite{CDMb}):
\begin{enumerate}
\item \label{geometry-1}Let $w=\left(w_{1},\ldots,w_{n}\right)$ be an
orthonormal system of coordinates centered at $\zeta$. Then
\[
\left|\frac{\partial^{\left|\alpha+\beta\right|}\rho(\zeta)}{\partial w^{\alpha}\partial\bar{w}^{\beta}}\right|\lesssim\frac{\varepsilon}{\prod_{i}\tau\left(\zeta,w_{i},\varepsilon\right)^{\alpha_{i}+\beta_{i}}},\,\left|\alpha+\beta\right|\geq1.
\]

\item \label{geometry-2}Let $\nu$ be a unit vector. Let $a_{\alpha\beta}^{\nu}(\zeta)=\frac{\partial^{\alpha+\beta}\rho}{\partial\lambda^{\alpha}\partial\bar{\lambda}^{\beta}}\left(\zeta+\lambda\nu\right)_{|\lambda=0}$.
Then
\[
\sum_{1\leq\left|\alpha+\beta\right|\leq2m}\left|a_{\alpha\beta}^{\nu}(\zeta)\right|\tau(\zeta,\nu,\varepsilon)^{\alpha+\beta}\simeq\varepsilon.
\]

\item \label{geometry-3}If $\left(v_{1},\ldots,v_{n}\right)$ is a $\left(\zeta,\varepsilon\right)$-extremal
basis and $\gamma=\sum_{1}^{n}a_{j}v_{j}\neq0$, then
\[
\frac{1}{\tau(\zeta,\gamma,\varepsilon)}\simeq\sum_{j=1}^{n}\frac{\left|a_{j}\right|}{\tau_{j}(\zeta,\varepsilon)}.
\]

\item \label{geometry-4}If $v$ is a unit vector then:

\begin{enumerate}
\item $z=\zeta+\lambda v\in P_{\varepsilon}(\zeta)$ implies $\left|\lambda\right|\lesssim\tau(\zeta,v,\varepsilon)$,
\item $z=\zeta+\lambda v$ with $\left|\lambda\right|\leq\tau(\zeta,v,\varepsilon)$
implies $z\in CP_{\varepsilon}(\zeta)$.
\end{enumerate}
\item \label{geometry-5}If $\nu$ is the unit complex normal, then $\tau(\zeta,v,\varepsilon)=\varepsilon$
and if $v$ is any unit vector and $\lambda\geq1$,
\begin{equation}
\lambda^{\nicefrac{1}{m}}\tau_{j}(\zeta,v,\varepsilon)\lesssim\tau_{j}(\zeta,v,\lambda\varepsilon)\lesssim\lambda\tau_{j}(\zeta,v,\varepsilon),\label{eq:comp-tau-epsilon-tau-lambda-epsilon}
\end{equation}
where $m$ is the type of $\Omega$.\end{enumerate}
\begin{sllem}
\label{lem:maj-deriv-rho-equiv-tho-i-z-zeta}For $z$ close to $\partial\Omega$,
$\varepsilon$ small and $\zeta\in P_{\varepsilon}(z)$, in the coordinate
system $\left(\zeta_{i}\right)$ associated to the $\left(z,\varepsilon\right)$-extremal
basis, we have:
\begin{enumerate}
\item $\left|\frac{\partial\rho}{\partial\zeta_{i}}(\zeta)\right|\lesssim\frac{\varepsilon}{\tau_{i}(z,\varepsilon)}$
(property (\ref{geometry-1}) of the geometry recalled above);
\item $\tau_{i}(\zeta,\varepsilon)\simeq\tau_{i}(z,\varepsilon)$ if $c_{0}$
is chosen sufficiently small.
\end{enumerate}
\end{sllem}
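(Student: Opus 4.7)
The plan is to reduce both (1) and (2) to Taylor expansions of $\rho$ and its first derivatives about $z$, written in the $(z,\varepsilon)$-extremal coordinates, and to use the smallness of $c_{0}$ to absorb the combinatorial constants produced by these expansions. The three geometric properties (1), (2), (3) recalled above then package the resulting pointwise estimates into the desired comparison of anisotropic scales.

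For (1), I would expand, for $\zeta\in P_{\varepsilon}(z)$,
\begin{equation*}
\frac{\partial\rho}{\partial\zeta_{i}}(\zeta)=\frac{\partial\rho}{\partial\zeta_{i}}(z)+\sum_{1\le|\alpha+\beta|\le 2m}\frac{1}{\alpha!\,\beta!}\,\frac{\partial^{|\alpha+\beta|+1}\rho}{\partial\zeta_{i}\,\partial\zeta^{\alpha}\,\partial\bar\zeta^{\beta}}(z)\,(\zeta-z)^{\alpha}(\overline{\zeta-z})^{\beta}+R,
\end{equation*}
with $R=O(|\zeta-z|^{2m+1})$. Property (1) of the geometry at $z$ bounds each coefficient by $\varepsilon\,\tau_{i}(z,\varepsilon)^{-1}\prod_{j}\tau_{j}(z,\varepsilon)^{-(\alpha_{j}+\beta_{j})}$, and the inclusion $\zeta\in P_{\varepsilon}(z)$ gives $|\zeta_{j}-z_{j}|\le c_{0}\tau_{j}(z,\varepsilon)$; hence every monomial of the expansion is majorized by $c_{0}^{|\alpha+\beta|}\,\varepsilon/\tau_{i}(z,\varepsilon)$. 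The remainder $R$ is controlled using $|\zeta-z|\lesssim\tau_{n}(z,\varepsilon)\lesssim\varepsilon^{1/m}$ together with the smoothness of $\rho$. Combined with the direct estimate for $|\partial\rho/\partial\zeta_{i}(z)|$ from property (1), this yields (1).

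For (2), the same expansion applied to $a_{\alpha\beta}^{v_{i}}(\zeta)=\frac{\partial^{\alpha+\beta}\rho}{\partial\lambda^{\alpha}\partial\bar\lambda^{\beta}}(\zeta+\lambda v_{i})\big|_{\lambda=0}$, expanded about $z$, gives, for $1\le\alpha+\beta\le 2m$, the perturbation estimate
\begin{equation*}
\bigl|a_{\alpha\beta}^{v_{i}}(\zeta)-a_{\alpha\beta}^{v_{i}}(z)\bigr|\lesssim c_{0}\,\frac{\varepsilon}{\tau_{i}(z,\varepsilon)^{\alpha+\beta}}.
\end{equation*}
Using property (2) at $z$ with $\nu=v_{i}$ (which gives $\sum|a_{\alpha\beta}^{v_{i}}(z)|\tau_{i}(z,\varepsilon)^{\alpha+\beta}\simeq\varepsilon$) and the smallness of $c_{0}$, I would deduce $\sum|a_{\alpha\beta}^{v_{i}}(\zeta)|\tau_{i}(z,\varepsilon)^{\alpha+\beta}\simeq\varepsilon$. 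Property (2) applied now at $\zeta$ with $\nu=v_{i}$ forces $\tau(\zeta,v_{i},\varepsilon)\simeq\tau_{i}(z,\varepsilon)$. Running the same Taylor argument for an arbitrary unit vector $v$ yields $\tau(\zeta,v,\varepsilon)\simeq\tau(z,v,\varepsilon)$ uniformly in $v$, after which property (3), which expresses $1/\tau(\zeta,\cdot,\varepsilon)$ as a diagonal norm in the $\zeta$-extremal basis, delivers $\tau_{i}(\zeta,\varepsilon)\simeq\tau_{i}(z,\varepsilon)$.

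The delicate point is the \emph{lower} bound in the sum at $\zeta$: the upper bound on $\sum|a_{\alpha\beta}^{v_{i}}(\zeta)|\tau_{i}(z,\varepsilon)^{\alpha+\beta}$ is immediate from the Taylor estimate, but the lower bound requires that the perturbation $|a_{\alpha\beta}^{v_{i}}(\zeta)-a_{\alpha\beta}^{v_{i}}(z)|$ stay strictly below the main-term sum at $z$. This is precisely what forces $c_{0}$ to be chosen small, uniformly in $z$ near $\partial\Omega$ and in $\varepsilon$, depending only on the absolute constants appearing in property (1).
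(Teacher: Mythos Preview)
The paper does not actually prove this lemma: part (1) is attributed directly to geometric property (\ref{geometry-1}), and part (2) is quoted as a standard fact from the anisotropic geometry developed in \cite{Conrad_lineally_convex} and \cite{CDMb}. Your Taylor-expansion argument is a correct and essentially standard way to verify both statements from scratch, and it is in spirit exactly what underlies those references.

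One small comparison worth noting: for (1), the route the paper has in mind (given the parenthetical remark) is simply to apply property (\ref{geometry-1}) \emph{at the point $\zeta$} with the orthonormal system given by the $(z,\varepsilon)$-extremal vectors $v_{i}$, obtaining $\bigl|\partial\rho/\partial\zeta_{i}(\zeta)\bigr|\lesssim\varepsilon/\tau(\zeta,v_{i},\varepsilon)$, and then to invoke (2). Your approach instead expands around $z$ and bounds the increments directly, which proves (1) without appealing to (2); this is slightly longer but logically cleaner, since it makes (1) independent of (2). For (2), your perturbation of the quantities $a_{\alpha\beta}^{v}$ combined with property (\ref{geometry-2}) is precisely the standard argument, and you have correctly identified that the smallness of $c_{0}$ is needed only to secure the lower bound in $\sum|a_{\alpha\beta}^{v}(\zeta)|\,\tau(z,v,\varepsilon)^{\alpha+\beta}\simeq\varepsilon$.
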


\medskip{}

We now recall the detailed expression of $K_{N}^{1}$ (\cite{CDMb}
sections 2.2 and 2.3):
\[
K_{N}^{1}(z,\zeta)=\sum_{k=n-r}^{n-1}C'_{k}\frac{\rho(\zeta)^{k+N}s\wedge\left(\partial_{\bar{\zeta}}Q\right)^{n-r}\wedge\left(\partial_{\bar{z}}Q\right)^{k+r-n}\wedge\left(\partial_{\bar{z}}s\right)^{n-k-1}}{\left|z-\zeta\right|^{2\left(n-k\right)}\left(\frac{1}{K_{0}}S(z,\zeta)+\rho(\zeta)\right)^{k+N}},
\]
where
\[
s(z,\zeta)=\sum_{i=1}^{n}\left(\overline{\zeta_{i}}-\overline{z_{i}}\right)d\left(\zeta_{i}-z_{i}\right)
\]
and
\[
Q(z,\zeta)=\frac{1}{K_{0}\rho(\zeta)}\sum_{i=1}^{n}Q_{i}(z,\zeta)d\left(\zeta_{i}-z_{i}\right)
\]
with
\[
S(z,\zeta)=\chi(z,\zeta)S_{0}(z,\zeta)-\left(1-\chi(z,\zeta)\right)\left|z-\zeta\right|^{2}=\sum_{i=1}^{n}Q_{i}(z,\zeta)\left(z_{i}-\zeta_{i}\right),
\]
$S_{0}$ being the holomorphic support function of Diederich-Fornaess
(see \cite{Diederich-Fornaess-Support-Func-lineally-cvx} or Theorem
2.2 of \cite{CDMb}) and $\chi$ a truncating function which is equal to $1$ when both $\left|z-\zeta\right|$ and
 $\delta_{\Omega}(\zeta)$ are small and $0$ if one of these expressions is large (see
the beginning of Section 2.2 of \cite{CDMb} for a precise definition). Recall that $K_{0}$
is chosen so that
\[
\Re\mathrm{e}\left(\rho(\zeta)+\frac{1}{K_{0}}S(z,\zeta)\right)<\frac{\rho(\zeta)}{2},
\]
that is
\begin{equation}
\left|\rho(\zeta)+\frac{1}{K_{0}}S(z,\zeta)\right|\gtrsim\left|\rho(\zeta)\right|.\label{eq:real-part-Q-zeta-z-plus-1}
\end{equation}

The following estimates of the expressions appearing in $K_{N}^{1}$
are basic (see \cite{CDMb}):
\begin{sllem}
\label{lem:lemma-maj-rho-S}For $\zeta\in P_{2\varepsilon}(z)\setminus P_{\varepsilon}(z)$,
we have:

\[
\left|\rho(\zeta)+\frac{1}{K_{0}}S(z,\zeta)\right|\gtrsim\varepsilon,\,\left(z,\zeta\right)\in\bar{\Omega}\times\bar{\Omega}.
\]

\end{sllem}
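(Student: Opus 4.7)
The plan is to reduce the desired lower bound to the refined Diederich-Fornaess majorization of $-\Re S_0$ combined with the polydisc geometry. Since (\ref{eq:real-part-Q-zeta-z-plus-1}) already gives $|\rho(\zeta) + \tfrac{1}{K_{0}}S(z,\zeta)| \gtrsim |\rho(\zeta)|$, the real content of the lemma lies in the regime $|\rho(\zeta)| \ll \varepsilon$, where the missing $\varepsilon$-sized contribution must be extracted from $S(z,\zeta)$ itself.

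First I would dispose of the region where the truncating function satisfies $\chi(z,\zeta) < 1$. For $\varepsilon$ smaller than a threshold $\varepsilon_{0}$, the inclusion $\zeta \in P_{2\varepsilon}(z)$ forces $|z - \zeta|$ to be as small as desired, since the normal direction has $\tau_{1} = 2\varepsilon$ and the non-normal directions satisfy $\tau_{k}(z,2\varepsilon) \to 0$ by (\ref{eq:comp-tau-epsilon-tau-lambda-epsilon}). By the definition of $\chi$ recalled immediately after the definition of $S$, having $\chi(z,\zeta) < 1$ with $|z-\zeta|$ so small forces $\delta_{\Omega}(\zeta)$ to be bounded below by a fixed positive constant. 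Hence $|\rho(\zeta)| \gtrsim 1 \gtrsim \varepsilon$, and the bound is immediate from (\ref{eq:real-part-Q-zeta-z-plus-1}).

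The principal case is therefore $\chi(z,\zeta) \equiv 1$, where $S(z,\zeta) = S_{0}(z,\zeta)$. Here I would invoke the refined upper bound on $\Re S_{0}$ provided by the Diederich-Fornaess construction for lineally convex domains of finite type (\cite{Diederich-Fornaess-Support-Func-lineally-cvx}; Theorem 2.2 of \cite{CDMb}). That estimate supplements the $c\rho(\zeta)$ already present in (\ref{eq:real-part-Q-zeta-z-plus-1}) by a strictly negative contribution $-c' M_{\varepsilon}(z,\zeta)$, where $M_{\varepsilon}(z,\zeta)$ is a positive sum, assembled from the Taylor expansion of $\rho$ along the $(\zeta,\varepsilon)$-extremal basis and the derivative estimate (\ref{geometry-1}), of terms weighted by powers of $|\lambda_{k}|/\tau_{k}(\zeta,\varepsilon)$ with $\lambda_{k} = \langle z-\zeta, v_{k}\rangle$. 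By \lemref{maj-deriv-rho-equiv-tho-i-z-zeta}(2), these data are equivalent to the corresponding $(z,\varepsilon)$-extremal data up to constants. The hypothesis $\zeta \notin P_{\varepsilon}(z)$, read against the very definition of $P_{\varepsilon}(z)$, forces some index $k$ for which $|\lambda_{k}| \gtrsim \tau_{k}(z,\varepsilon)$, which contributes a term of size $\gtrsim \varepsilon$ to $M_{\varepsilon}(z,\zeta)$. Taking real parts yields $-\Re\bigl(\rho(\zeta) + \tfrac{1}{K_{0}}S_{0}(z,\zeta)\bigr) \gtrsim \varepsilon$, whence $|\rho(\zeta) + \tfrac{1}{K_{0}}S(z,\zeta)| \gtrsim \varepsilon$.

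The main obstacle is producing and verifying the refined majorization of $-\Re S_{0}$ in a form that aligns with the polydisc geometry: one must ensure that whichever extremal direction witnesses $\zeta \notin P_{\varepsilon}(z)$ actually feeds into the support function bound a term of the correct order $\varepsilon$, uniformly in $z$, $\zeta$, and $\varepsilon$. This is the technical heart of the Diederich-Fornaess support function construction, and it rests on properties (\ref{geometry-1})--(\ref{geometry-5}) to translate between the geometry of $\rho$ and the quantitative structure of the polydiscs.
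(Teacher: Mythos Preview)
The paper does not actually prove this lemma: it is listed among the ``basic'' estimates recalled from \cite{CDMb} (see the sentence immediately preceding \lemref{lemma-maj-rho-S}), so there is no in-paper proof to compare against. Your outline is precisely the standard argument found in that reference and in \cite{Diederich-Fornaess-Support-Func-lineally-cvx}, \cite{Diederich-Fischer_Holder-linally-convex}: dispose of the truncation region via (\ref{eq:real-part-Q-zeta-z-plus-1}), and in the principal case $\chi\equiv1$ combine the refined Diederich--Forn\ae ss upper bound on $\Re S_{0}$ with the exit condition $\zeta\notin P_{\varepsilon}(z)$, invoking \lemref{maj-deriv-rho-equiv-tho-i-z-zeta} and properties (\ref{geometry-2})--(\ref{geometry-4}) to convert the witnessing coordinate $|\lambda_{k}|\gtrsim\tau_{k}$ into a contribution of size $\gtrsim\varepsilon$.
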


\begin{sllem}
\label{lem:Lemma-maj-deriv_rho_Q}For $z$ close to $\partial\Omega$,
$\varepsilon$ small and $\zeta\in P_{\varepsilon}(z)$, in the coordinate
system $\left(\zeta_{i}\right)$ associated to the $\left(z,\varepsilon\right)$-extremal
basis, we have:
\begin{enumerate}
\item $\left|Q_{i}(z,\zeta)\right|+\left|Q_{i}(\zeta,z)\right|\lesssim\frac{\varepsilon}{\tau_{i}(z,\varepsilon)}$
(see \cite{Diederich-Fischer_Holder-linally-convex});
\item $\left|\frac{\partial Q_{i}(z,\zeta)}{\partial\overline{\zeta_{j}}}\right|\lesssim\frac{\varepsilon}{\tau_{i}(z,\varepsilon)\tau_{j}(z,\varepsilon)}$
(see \cite{Diederich-Fischer_Holder-linally-convex});
\item $\left|\frac{\partial^{2}Q_{i}(z,\zeta)}{\partial\overline{\zeta}_{j}\partial z_{k}}\right|+\left|\frac{\partial^{2}Q_{i}(z,\zeta)}{\partial\overline{\zeta}_{j}\partial\overline{z}_{k}}\right|\lesssim\frac{\varepsilon}{\tau_{i}(z,\zeta)\tau_{j}(z,\zeta)\tau_{k}(z,\zeta)}$
(see \cite{Diederich-Fischer_Holder-linally-convex}).
\end{enumerate}
\end{sllem}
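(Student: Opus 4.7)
The plan is to follow the approach used by Diederich--Fischer in the reference cited. First I would note that, by the definition of the cutoff $\chi$, for $\zeta\in P_{\varepsilon}(z)$ with $\varepsilon$ small, one has $\chi(z,\zeta)=1$, so that $S(z,\zeta)=S_{0}(z,\zeta)$ is the Diederich--Fornaess holomorphic support function on the set of interest. Next I would fix the Hefer representation coming from the Koppelman--Leray construction, namely
\[
Q_{i}(z,\zeta)=\int_{0}^{1}\frac{\partial S_{0}}{\partial z_{i}}\bigl(\zeta+t(z-\zeta),\zeta\bigr)\,dt,
\]
taken in the coordinate system attached to the $(z,\varepsilon)$-extremal basis. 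Because $\zeta\in P_{\varepsilon}(z)$ and polydiscs behave well under convex combinations (property (\ref{geometry-4})), the segment $\zeta+t(z-\zeta)$ stays in $P_{C\varepsilon}(z)$, and by \lemref{maj-deriv-rho-equiv-tho-i-z-zeta}(2) the anisotropic scales $\tau_{i}$ at every point of the segment are comparable to $\tau_{i}(z,\varepsilon)$.

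The next step is to exploit the explicit structure of $S_{0}$. By the Diederich--Fornaess construction, near the diagonal $S_{0}(z,\zeta)$ is essentially a truncated Taylor expansion of $-\rho$ at $\zeta$ in the $z$ variable, in which the linear part is $\sum_{i}\frac{\partial\rho}{\partial\zeta_{i}}(\zeta)(z_{i}-\zeta_{i})$ and the higher-order terms are polynomials in $(z-\zeta)$ whose coefficients are mixed derivatives of $\rho$ at $\zeta$. Differentiating $S_{0}$ in $z_{i}$ picks out these Taylor coefficients and reduces everything to the pointwise estimate
\[
\left|\frac{\partial^{|\alpha+\beta|}\rho(\zeta)}{\partial w^{\alpha}\partial\bar{w}^{\beta}}\right|\lesssim\frac{\varepsilon}{\prod_{k}\tau_{k}(\zeta,\varepsilon)^{\alpha_{k}+\beta_{k}}}
\]
from property (\ref{geometry-1}), applied in the extremal basis at $\zeta$ and then transferred to the extremal basis at $z$ via \lemref{maj-deriv-rho-equiv-tho-i-z-zeta}(2). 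This gives $|\partial S_{0}/\partial z_{i}|\lesssim\varepsilon/\tau_{i}(z,\varepsilon)$ along the segment, yielding the bound (1) for $Q_{i}(z,\zeta)$; the bound for $Q_{i}(\zeta,z)$ follows by symmetry after swapping the roles of $z$ and $\zeta$ and invoking the same comparability of the $\tau_{i}$.

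Bounds (2) and (3) are obtained by differentiating under the integral sign in the Hefer representation. Each additional $\bar{\zeta}_{j}$ (resp.\ $z_{k}$ or $\bar{z}_{k}$) derivative of $S_{0}$ raises the order of the Taylor coefficient of $\rho$ by one in the $j$-th (resp.\ $k$-th) extremal direction, which by property (\ref{geometry-1}) gains exactly one extra factor of $1/\tau_{j}(z,\varepsilon)$ (resp.\ $1/\tau_{k}(z,\varepsilon)$). The Leibniz and chain rule corrections (including factors of $t$ coming from the segment parametrization) are bounded trivially since $t\in[0,1]$, and the resulting estimates are exactly those stated in (2) and (3).

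The main technical obstacle is not the final bookkeeping but the transfer of estimates between the two extremal bases. The support function $S_{0}$ is only constructed locally (and uses coordinates adapted to $\zeta$), while the lemma demands the bounds in the basis adapted to $z$; verifying that this change of basis does not destroy the anisotropy crucially relies on $\zeta\in P_{\varepsilon}(z)$ together with \lemref{maj-deriv-rho-equiv-tho-i-z-zeta}(2) and the doubling property (\ref{eq:polydiscs-pseudodistance}). Once this is set up carefully, the rest is a direct computation exactly as in \cite{Diederich-Fischer_Holder-linally-convex}.
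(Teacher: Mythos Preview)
The paper does not supply its own proof of this lemma: each of the three estimates is simply stated with a citation to \cite{Diederich-Fischer_Holder-linally-convex}, and the lemma is used as a black box thereafter. Your sketch is a reasonable expansion of how that cited argument proceeds, and the overall strategy---reduce to derivative bounds on $S_{0}$, invoke property (\ref{geometry-1}) for derivatives of $\rho$, and transfer between the extremal bases at $z$ and $\zeta$ via \lemref{maj-deriv-rho-equiv-tho-i-z-zeta}(2) and the pseudodistance property (\ref{eq:polydiscs-pseudodistance})---is correct.

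One small remark on your write-up: in the actual Diederich--Forn\ae ss construction the support function $S_{0}(z,\zeta)$ is already given explicitly as a finite sum of terms that are polynomial in $(z-\zeta)$ with coefficients built from derivatives of $\rho$ at $\zeta$ (it is more elaborate than a bare Taylor polynomial, but has the same anisotropic scaling), so in practice one reads the $Q_{i}$ off directly from that expression rather than via the Hefer integral you wrote. The two routes are equivalent, and either way the estimates come down to exactly the bookkeeping you describe.
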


\bigskip{}

To simplify notations, we will now do the proofs of the theorems only
for $\left(0,1\right)$-forms, the general case of $\left(0,r\right)$-forms
being identical except for complications in the notations.

The preceding lemmas and the properties of the geometry easily give
the following estimates of the kernel $K_{N}^{1}$ (for $\left(0,1\right)$-forms):
\begin{sllem}
For $\varepsilon$ small enough and $z$ sufficiently close to the
boundary we have:

If $\zeta\in P_{\varepsilon}(z)$,
\[
\left|K_{N}^{1}(z,\zeta)\right|\lesssim\frac{\rho(\zeta)^{N-1}\left(\left|\rho(\zeta)\right|+\varepsilon\right)\varepsilon^{n-1}}{\prod_{i=1}^{n-1}\tau_{i}(z,\varepsilon)\left|\frac{1}{K_{0}}S(z,\zeta)+\rho(\zeta)\right|^{N+n-1}}\frac{1}{\left|z-\zeta\right|}.
\]

\end{sllem}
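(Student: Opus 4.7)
The plan is to prove this lemma by specializing the kernel formula for $(0,1)$-forms, performing a crucial algebraic reduction on $(\partial_{\bar\zeta}Q)^{n-1}$, and then invoking the preceding size estimates in the $(z,\varepsilon)$-extremal coordinates.

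First, I specialize to $r=1$: the sum defining $K_N^1$ collapses to the single term with $k=n-1$, so
\[
K_N^1(z,\zeta)=C'_{n-1}\,\frac{\rho(\zeta)^{N+n-1}\,s\wedge(\partial_{\bar\zeta}Q)^{n-1}}{|z-\zeta|^{2}\bigl(\tfrac{1}{K_0}S(z,\zeta)+\rho(\zeta)\bigr)^{N+n-1}}.
\]
The denominator is already of the target form. It remains to bound $|s\wedge(\partial_{\bar\zeta}Q)^{n-1}|$ by $\frac{(|\rho|+\varepsilon)\,\varepsilon^{n-1}\,|z-\zeta|}{|\rho|^{n}\,\prod_{i=1}^{n-1}\tau_i(z,\varepsilon)}$, which then absorbs the factor $1/|z-\zeta|^{2}$ up to a single $1/|z-\zeta|$.

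The key algebraic step is to write
\[
\partial_{\bar\zeta}Q=A-B\wedge C,
\]
where $A=\tfrac{1}{K_0\rho(\zeta)}\sum_{i,j}\tfrac{\partial Q_i}{\partial\bar\zeta_j}\,d\bar\zeta_j\wedge d(\zeta_i-z_i)$, $B=\tfrac{1}{K_0\rho(\zeta)^{2}}\,\bar\partial\rho$, and $C=\sum_i Q_i\,d(\zeta_i-z_i)$. Since $B$ is a $(0,1)$-form, $B\wedge B=0$, so $(B\wedge C)^{2}=0$ and the binomial expansion collapses:
\[
(\partial_{\bar\zeta}Q)^{n-1}=A^{n-1}-(n-1)\,A^{n-2}\wedge B\wedge C.
\]
This is the crucial observation that prevents each of the $n-1$ factors from individually carrying a large $(|\rho|+\varepsilon)/|\rho|$ cost.

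Next, in the extremal coordinates centered at $z$, I apply the already-established size bounds: $|A_{ij}|\lesssim\varepsilon/(|\rho|\,\tau_i\tau_j)$ from \lemref{Lemma-maj-deriv_rho_Q}(2) (combined with \lemref{maj-deriv-rho-equiv-tho-i-z-zeta}(2) to replace $\tau_i(\zeta,\varepsilon)$ by $\tau_i(z,\varepsilon)$); $|B_j|\lesssim\varepsilon/(|\rho|^{2}\tau_j)$ from \lemref{maj-deriv-rho-equiv-tho-i-z-zeta}(1); and $|C_i|\lesssim\varepsilon/\tau_i$ from \lemref{Lemma-maj-deriv_rho_Q}(1). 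Combined with $|s|\lesssim|z-\zeta|$ (coefficients of $s$ are $\bar\zeta_i-\bar z_i$), one gets two distinct contributions: the $s\wedge A^{n-1}$ piece contributes $\varepsilon^{n-1}/|\rho|^{n-1}$ (times $|z-\zeta|$ and a $\tau$-product), which when multiplied by $\rho(\zeta)^{N+n-1}$ yields $|\rho|^{N}\varepsilon^{n-1}$; the $s\wedge A^{n-2}\wedge B\wedge C$ piece contributes $\varepsilon^{n}/|\rho|^{n}$, yielding $|\rho|^{N-1}\varepsilon^{n}$. Their sum is precisely $|\rho|^{N-1}(|\rho|+\varepsilon)\varepsilon^{n-1}$.

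The final step is to check that the $\tau$-denominator collapses correctly. Because $s\wedge(\partial_{\bar\zeta}Q)^{n-1}$ must fill the top $(n,n-1)$-slot, the wedge with $s=\sum\bar\lambda_\ell\,d\lambda_\ell$ forces the holomorphic index set of the surviving monomial to be $\{1,\dots,n\}\setminus\{\ell\}$; one then uses $|\lambda_\ell|\le\tau_\ell$ and (crucially) $\tau_n(z,\varepsilon)=\varepsilon$ for the complex normal direction to convert $\prod_{i=1}^{n}\tau_i$ into $\varepsilon\prod_{i=1}^{n-1}\tau_i$, cancelling one of the $\varepsilon$-factors and yielding exactly $\prod_{i=1}^{n-1}\tau_i(z,\varepsilon)$ in the denominator. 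The main obstacle is this last bookkeeping: one must verify that the Hodge-norm computations of $s\wedge A^{n-1}$ and $s\wedge A^{n-2}\wedge B\wedge C$, when optimized over all top-form basis components indexed by $J\subset\{1,\dots,n\}$ with $|J|=n-1$, really collapse to a single power of $\prod_{i=1}^{n-1}\tau_i$ rather than a square, which is a consequence of the nilpotence identity together with the fact that $s$ provides exactly one $d\lambda_\ell$ (and hence one $\tau_\ell$-cancellation) into the top $\zeta$-form.
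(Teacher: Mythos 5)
Your first two steps are sound and in fact reproduce exactly the reduction the paper itself relies on: for $r=1$ only the term $k=n-1$ survives, and writing $\partial_{\bar\zeta}Q=A-B\wedge C$ with $(B\wedge C)^{2}=0$ is precisely what produces the two shapes of the numerator $W$ displayed in \secref{Proof-of-thm-est-d-bar-Nev} (either every $\bar\zeta$-derivative falls on a $Q_{i}$, or exactly one falls on $1/\rho$). Your $\rho$- and $\varepsilon$-count, yielding the factor $\rho^{N-1}\left(\left|\rho\right|+\varepsilon\right)\varepsilon^{n-1}$, is also correct.

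The gap is in the last step, the $\tau$-bookkeeping, and the justification you offer for it does not work. First, the paper's extremal basis puts the complex normal at $v_{1}$, so $\tau_{1}(z,\varepsilon)=\varepsilon$ and $\tau_{n}$ is the \emph{largest} radius; your identity ``$\tau_{n}=\varepsilon$'' therefore produces a product different from the $\prod_{i=1}^{n-1}\tau_{i}$ of the statement. Second, you use the coefficient $\bar\lambda_{\ell}$ of $s$ twice: once as $\left|\bar\lambda_{\ell}\right|\leq\left|z-\zeta\right|$ to reduce $\left|z-\zeta\right|^{-2}$ to $\left|z-\zeta\right|^{-1}$, and again as $\left|\bar\lambda_{\ell}\right|\leq\tau_{\ell}$ to cancel a $\tau$; only one of these is available. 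Third, and decisively, each factor $\partial Q_{i_{k}}/\partial\bar\zeta_{j_{k}}$ costs $\varepsilon/\left(\tau_{i_{k}}\tau_{j_{k}}\right)$ by \lemref{Lemma-maj-deriv_rho_Q}, i.e.\ \emph{two} $\tau$'s, one from each index family, and $s$ removes only one $\tau$ from the holomorphic family. The termwise bound for the component missing $d\bar\zeta_{l}$ is therefore $\varepsilon^{n-1}\tau_{m}\tau_{l}\bigl(\prod_{i=1}^{n}\tau_{i}\bigr)^{-2}$, whose worst case $m=l=n$ gives $\varepsilon^{n-1}\bigl(\prod_{i=1}^{n-1}\tau_{i}\bigr)^{-2}$ --- a square, not a first power; the nilpotence identity governs only the powers of $\rho$ and contributes nothing here. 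Already for $n=2$ the normal--normal term $s\wedge\rho^{-2}Q_{1}\,\partial_{\bar\zeta_{1}}\rho\,d\bar\zeta_{1}\wedge d\zeta_{1}$ is of size $\rho^{N-1}\left|z-\zeta\right|^{-1}\left|\tfrac{1}{K_{0}}S+\rho\right|^{-N-1}$, exceeding the stated bound by $\left(\left|\rho\right|+\varepsilon\right)^{-1}$. You should not expect to close this step as stated: everywhere the paper actually invokes the lemma (the display in the proof of (1) of \lemref{estimates-kernel}, and the gradient estimate used for parts (2) and (3) of the theorem) it writes $\prod_{j=1}^{n-1}\tau_{j}^{2}$, and the exponent arithmetic there (the $-n\eta$ term, hence the value of $\alpha$) requires the square, so the first power in the statement appears to be a typo. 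A correct write-up must either exhibit a genuine cancellation giving the first power (your argument supplies none) or prove the squared version and check it suffices for all subsequent applications.
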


In particular:
\begin{sllem}
\label{lem:basic-estimates-kernel}For $\varepsilon$ small enough
and $z$ sufficiently close to the boundary:
\begin{enumerate}
\item If $\varepsilon\leq\delta_{\partial\Omega}(z)$, for $\zeta\in P_{\varepsilon}(z)$,
\[
\left|K_{N}^{1}(z,\zeta)\right|\lesssim\frac{1}{\prod_{i=1}^{n-1}\tau_{i}(z,\varepsilon)}\frac{1}{\left|z-\zeta\right|}.
\]

\item If $\zeta\in P_{2\varepsilon}(z)\setminus P_{\varepsilon}(z)$ or
$z\in P_{2\varepsilon}(\zeta)\setminus P_{\varepsilon}(\zeta)$ and
$k\leq N+n-1$,
\[
\left|K_{N}^{1}(z,\zeta)\right|\lesssim\frac{\left|\rho(\zeta)\right|^{k}}{\varepsilon^{k}}\frac{1}{\prod_{i=1}^{n-1}\tau_{i}}\frac{1}{\left|z-\zeta\right|},
\]
and
\[
\left|\nabla_{z}K_{N}^{1}(z,\zeta)\right|\lesssim\frac{\left|\rho(\zeta)\right|^{k}}{\varepsilon^{k+1}}\frac{1}{\prod_{i=1}^{n-1}\tau_{i}}\frac{1}{\left|z-\zeta\right|},
\]
where $\tau_{i}$ is either $\tau_{i}(z,\varepsilon)$ or $\tau_{i}(\zeta,\varepsilon)$.
\end{enumerate}
\end{sllem}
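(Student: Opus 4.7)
Both estimates will follow by applying the preceding unnamed pointwise estimate of $|K_N^1|$ on $P_\varepsilon(z)$, combined with the lower bounds on $|\tfrac{1}{K_0}S(z,\zeta)+\rho(\zeta)|$ provided by \eqref{eq:real-part-Q-zeta-z-plus-1} and Lemma \ref{lem:lemma-maj-rho-S}, and with the symmetries of the anisotropic geometry (Lemma \ref{lem:maj-deriv-rho-equiv-tho-i-z-zeta}(2), properties \eqref{eq:polydiscs-pseudodistance} and \eqref{eq:comp-tau-epsilon-tau-lambda-epsilon}). The decisive observation in each case will be a good lower bound for $|\tfrac{1}{K_0}S(z,\zeta)+\rho(\zeta)|$ that, after inserting it into the previous lemma, absorbs the numerator $|\rho(\zeta)|^{N-1}(|\rho(\zeta)|+\varepsilon)\varepsilon^{n-1}$.

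\emph{Case (1).} When $\varepsilon\leq\delta_{\partial\Omega}(z)$ and $\zeta\in P_\varepsilon(z)$, I will first observe that $|\rho(\zeta)|\gtrsim\varepsilon$: the gradient of $\rho$ is uniformly bounded and $\zeta-z$ has length at most $O(\varepsilon)$ in the complex normal direction (the only direction in which $\rho$ varies to first order), so $|\rho(\zeta)|\geq|\rho(z)|-C\varepsilon\gtrsim\varepsilon$. Plugging into the preceding estimate and using \eqref{eq:real-part-Q-zeta-z-plus-1} to bound $|\tfrac{1}{K_0}S+\rho|^{N+n-1}\gtrsim|\rho(\zeta)|^{N+n-1}$, the ratio collapses to
\[
|K_N^1(z,\zeta)|\lesssim\frac{|\rho(\zeta)|^{N}\varepsilon^{n-1}}{|\rho(\zeta)|^{N+n-1}\prod_{i=1}^{n-1}\tau_i(z,\varepsilon)\,|z-\zeta|}=\frac{\varepsilon^{n-1}}{|\rho(\zeta)|^{n-1}}\cdot\frac{1}{\prod_{i=1}^{n-1}\tau_i(z,\varepsilon)\,|z-\zeta|}\lesssim\frac{1}{\prod_{i=1}^{n-1}\tau_i(z,\varepsilon)\,|z-\zeta|},
\]
where the last step uses $|\rho(\zeta)|\gtrsim\varepsilon$ again.

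\emph{Case (2).} For $\zeta\in P_{2\varepsilon}(z)\setminus P_\varepsilon(z)$ I will apply the preceding estimate at the level $2\varepsilon$ (property \eqref{eq:comp-tau-epsilon-tau-lambda-epsilon} with $\lambda=2$ gives $\tau_i(z,2\varepsilon)\simeq\tau_i(z,\varepsilon)$). The crucial new ingredient is Lemma \ref{lem:lemma-maj-rho-S}, which together with \eqref{eq:real-part-Q-zeta-z-plus-1} yields
\[
\bigl|\tfrac{1}{K_0}S(z,\zeta)+\rho(\zeta)\bigr|\gtrsim\max\bigl(|\rho(\zeta)|,\varepsilon\bigr),
\]
and consequently $|\tfrac{1}{K_0}S+\rho|^{N+n-1}\gtrsim|\rho(\zeta)|^{N+n-1-k}\varepsilon^{k}$ for every $0\leq k\leq N+n-1$. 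Inserting this splitting into the previous estimate and simplifying the numerator gives the first inequality of (2). The complementary case $z\in P_{2\varepsilon}(\zeta)\setminus P_\varepsilon(\zeta)$ reduces to the previous one by the pseudo-distance inequality \eqref{eq:polydiscs-pseudodistance} (up to replacing $\varepsilon$ by a fixed multiple), and Lemma \ref{lem:maj-deriv-rho-equiv-tho-i-z-zeta}(2) justifies freely writing either $\tau_i(z,\varepsilon)$ or $\tau_i(\zeta,\varepsilon)$.

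\emph{Gradient estimate.} A $z$-derivative of $K_N^1$ must fall on one of three groups of factors: the Euclidean factor $|z-\zeta|^{-2(n-k)}$, the numerator form $s\wedge(\partial_{\bar\zeta}Q)^{n-1}$, or the denominator $(\tfrac{1}{K_0}S+\rho)^{k+N}$; I will bound each contribution by $1/\varepsilon$ times the size of the undifferentiated kernel. For (i), differentiating $|z-\zeta|^{-2(n-k)}$ yields an extra $1/|z-\zeta|$, and in the regime of (2) one has $|z-\zeta|\gtrsim\tau_1(z,\varepsilon)=\varepsilon$. For (ii), Lemma \ref{lem:Lemma-maj-deriv_rho_Q}(3) controls $\nabla_z\partial_{\bar\zeta}Q_i$ by $\varepsilon/(\tau_i\tau_j\tau_k)\leq 1/(\varepsilon\tau_i\tau_j)$ since each $\tau_k\geq\varepsilon$, while $\nabla_z s$ is bounded. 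For (iii), differentiating the denominator produces a factor $\nabla_z S/(\tfrac{1}{K_0}S+\rho)$ of size $\lesssim 1/\varepsilon$, by the boundedness of $\nabla_z S$ (standard for the Diederich--Fornaess support function) and the lower bound on $|\tfrac{1}{K_0}S+\rho|$. The main bookkeeping obstacle is step (ii): one has to check that in the wedge product $s\wedge(\partial_{\bar\zeta}Q)^{n-1}$ each factor receives at most one $z$-derivative and that the sizes collapse correctly onto the claimed $\prod\tau_i$; once the preceding estimate and Lemma \ref{lem:Lemma-maj-deriv_rho_Q} are in hand, this reduces to an algebraic exercise.
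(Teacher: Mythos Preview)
Your proposal is correct and matches the paper's approach: the paper states this lemma immediately after the preceding pointwise estimate with the words ``In particular:'', treating it as a direct consequence, and your argument is precisely the natural way to unpack that---combining the preceding lemma with \eqref{eq:real-part-Q-zeta-z-plus-1}, \lemref{lemma-maj-rho-S}, and \lemref{maj-deriv-rho-equiv-tho-i-z-zeta} exactly as you do. Your treatment of the gradient term via \lemref{Lemma-maj-deriv_rho_Q}(3) and the observation $|z-\zeta|\gtrsim\varepsilon$ on the annulus is also the intended route.
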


An elementary calculation shows that:
\begin{sllem}
For $z\in\Omega$, $\delta$ small and $0\leq\mu<1$,
\begin{equation}
\int_{P(z,\delta)}\frac{d\lambda(\zeta)}{\left|z-\zeta\right|^{1+\mu}}\lesssim\tau_{n}(z,\delta)^{1-\mu}\prod_{j=1}^{n-1}\tau_{j}^{2}(z,\delta),\label{eq:integral-mod(z-zeta)-inverse-power-1+mu}
\end{equation}
and, for $\alpha>0$,
\begin{equation}
\int_{P(\zeta,\delta)}\frac{\delta_{\Omega}^{\alpha-1}(z)}{\left|z-\zeta\right|}d\lambda(z)\lesssim\frac{\delta^{\alpha-1}}{\alpha}\tau_{n}(\zeta,\delta)\prod_{j=1}^{n-1}\tau_{j}^{2}(\zeta,\delta).\label{eq:integral-mod(z-zeta)-incerse-delta-alpha-1}
\end{equation}

\end{sllem}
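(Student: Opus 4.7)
Both estimates are direct computations in the extremal coordinates based at the center of the polydisc, using $|z-\zeta|^{2}=\sum_{k}|\lambda_{k}|^{2}$ in that basis, together with the fact that among the $\tau_{k}$'s, $\tau_{n}$ is the largest (by the minimality in the extremal construction).

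For the first estimate, fix $z$ and write $\zeta=z+\sum_{k=1}^{n}\lambda_{k}v_{k}$ in the $(z,\delta)$-extremal basis, so that $|\lambda_{k}|\leq c_{0}\tau_{k}(z,\delta)$. Since $|z-\zeta|\geq|\lambda_{n}|$, Fubini gives
\[
\int_{P(z,\delta)}\frac{d\lambda(\zeta)}{|z-\zeta|^{1+\mu}}\leq\Bigl(\prod_{k=1}^{n-1}\int_{|\lambda_{k}|\leq c_{0}\tau_{k}}dA(\lambda_{k})\Bigr)\int_{|\lambda_{n}|\leq c_{0}\tau_{n}}\frac{dA(\lambda_{n})}{|\lambda_{n}|^{1+\mu}}.
\]
The $n-1$ planar integrations contribute $\lesssim\prod_{k=1}^{n-1}\tau_{k}(z,\delta)^{2}$, while the remaining one, evaluated in polar coordinates, equals $2\pi(c_{0}\tau_{n}(z,\delta))^{1-\mu}/(1-\mu)$, finite precisely because $\mu<1$. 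Multiplying yields the first bound.

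For the second estimate, work in the $(\zeta,\delta)$-extremal basis with $v_{1}$ the unit complex normal, so that $\tau_{1}(\zeta,\delta)=\delta$; write $z=\zeta+\sum_{k}\lambda_{k}v_{k}$ and again bound $|z-\zeta|\geq|\lambda_{n}|$. The integrations over $\lambda_{j}$ for $j=2,\ldots,n-1$ and over $\lambda_{n}$ produce $\prod_{j=2}^{n-1}\tau_{j}(\zeta,\delta)^{2}$ and $\tau_{n}(\zeta,\delta)$ respectively. The substantive step is to estimate
\[
I:=\int_{|\lambda_{1}|\leq c_{0}\delta}\delta_{\Omega}(\zeta+\lambda_{1}v_{1})^{\alpha-1}\,dA(\lambda_{1}).
\]
Writing $\lambda_{1}=u+iv$, a Taylor expansion gives $\rho(\zeta+\lambda_{1}v_{1})=\rho(\zeta)+2u\,|\partial\rho(\zeta)|+O(|\lambda_{1}|^{2})$ (the linear part is real because $v_{1}$ is the complex normal), and property (\ref{geometry-1}) with $c_{0}$ small shows that, for each fixed $v$, the map $u\mapsto t:=-\rho(\zeta+(u+iv)v_{1})$ is a bi-Lipschitz diffeomorphism of $[-c_{0}\delta,c_{0}\delta]$ onto an interval contained in $[0,C\delta]$. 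Changing variables from $u$ to $t$,
\[
I\lesssim\int_{-c_{0}\delta}^{c_{0}\delta}dv\int_{0}^{C\delta}t^{\alpha-1}\,dt\lesssim\delta\cdot\frac{\delta^{\alpha}}{\alpha}=\frac{\delta^{\alpha+1}}{\alpha},
\]
where $\alpha>0$ provides the integrability of $t^{\alpha-1}$ and accounts for the factor $1/\alpha$. Combining all contributions and using $\delta^{\alpha+1}=\delta^{\alpha-1}\,\tau_{1}(\zeta,\delta)^{2}$ gives the claimed bound.

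The only mildly delicate point is justifying the change of variables $u\mapsto-\rho$ on the normal disc: one must verify that the linear term in the Taylor expansion of $\rho$ along $v_{1}$ dominates the quadratic remainder, which follows from property (\ref{geometry-1}) of the geometry once $c_{0}$ is chosen small enough.
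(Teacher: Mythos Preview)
Your argument is correct and is exactly the ``elementary calculation'' the paper alludes to without spelling out; there is nothing to compare against.

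One imprecision worth fixing in the second estimate: you write
\[
I=\int_{|\lambda_{1}|\le c_{0}\delta}\delta_{\Omega}(\zeta+\lambda_{1}v_{1})^{\alpha-1}\,dA(\lambda_{1}),
\]
but $\delta_{\Omega}(z)$ depends on \emph{all} the coordinates $\lambda_{1},\dots,\lambda_{n}$, so Fubini does not literally reduce the problem to this integral. The remedy is to perform the $\lambda_{1}$-integration first, with base point $\zeta'=\zeta+\sum_{k\ge2}\lambda_{k}v_{k}$ held fixed. Your change of variables $u\mapsto -\rho$ works just as well there: $v_{1}$ is still approximately normal at $\zeta'$ (by property~(\ref{geometry-1}) and \lemref{maj-deriv-rho-equiv-tho-i-z-zeta}), so $|\partial_{v_{1}}\rho(\zeta')|\simeq1$, and after absorbing the phase of $\partial_{v_{1}}\rho(\zeta')$ into a rotation of $\lambda_{1}$ one obtains $\int_{|\lambda_{1}|\le c_{0}\delta}\delta_{\Omega}(\zeta'+\lambda_{1}v_{1})^{\alpha-1}\,dA(\lambda_{1})\lesssim\delta^{\alpha+1}/\alpha$ uniformly in $\lambda_{2},\dots,\lambda_{n}$. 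The remaining integrations then give $\tau_{n}(\zeta,\delta)\prod_{j=2}^{n-1}\tau_{j}(\zeta,\delta)^{2}$ as you wrote.
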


\subsection{Proof of Theorem \ref{thm:d-bar-q-gamma'-p-gamma-lip}}
\begin{proof}[Proof of (1) of \thmref{d-bar-q-gamma'-p-gamma-lip}]
It is based on a version of a classical operator estimate which can
be found, for example, in Appendix B of the book of M. Range \cite{RM86}:
\begin{sllem}
Let $\Omega$ be a smoothly bounded domain in $\mathbb{C}^{n}$. Let
$\mu$ and $\nu$ be two positive measures on $\Omega$. Let $K$
be a measurable function on $\Omega\times\Omega$. Assume that there
exists a positive number $\varepsilon_{0}>0$, a positive constant
$C$ and a real number $s\geq1$ such that:
\begin{enumerate}
\item $\int_{\Omega}\left|K(z,\zeta)\right|^{s}\delta_{\Omega}^{-\varepsilon}(\zeta)d\mu(\zeta)\leq C\delta_{\Omega}^{-\varepsilon}(z)$,
\item $\int_{\Omega}\left|K(z,\zeta)\right|^{s}\delta_{\Omega}^{-\varepsilon}(z)d\nu(z)\leq C\delta_{\Omega}^{-\varepsilon}(\zeta)$,
\end{enumerate}
for all $\varepsilon\leq\varepsilon_{0}$, where $\delta_{\Omega}$
denotes the distance to the boundary of $\Omega$. Then the linear
operator $T$ defined by
\[
Tf(z)=\int_{\Omega}K(z,\zeta)f(\zeta)d\mu(\zeta)
\]
 is bounded from $L^{p}\left(\Omega,\mu\right)$ to $L^{q}\left(\Omega,\nu\right)$
for all $1\leq p,q<\infty$ such that $\frac{1}{q}=\frac{1}{p}+\frac{1}{s}-1$.\end{sllem}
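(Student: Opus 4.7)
The strategy is a weighted Schur test with gain, modeled on the proof of Young's convolution inequality $L^{p}\ast L^{s}\subset L^{q}$ under the same arithmetic relation $1/q=1/p+1/s-1$. The two hypotheses of the lemma will be used as uniform weighted $L^{s}$-bounds on $K(z,\cdot)$ and on $K(\cdot,\zeta)$, and the flexibility to choose $\varepsilon\in(0,\varepsilon_{0}]$ is what allows the weighted version of the Young-type estimate to close.

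I would first introduce auxiliary exponents $\beta,\gamma$ by $1/\beta=1/p-1/q$ and $1/\gamma=1/s-1/q$; the hypothesis $1/q=1/p+1/s-1$ is then equivalent to $1/q+1/\beta+1/\gamma=1$, so H\"older's inequality with exponents $(q,\beta,\gamma)$ can be applied in the $\zeta$-variable. Fixing a small $\varepsilon\in(0,\varepsilon_{0}]$ to be chosen at the end, I would decompose pointwise
\[
|K(z,\zeta)f(\zeta)|=\bigl[|K|^{s/q}|f|^{p/q}\delta_{\Omega}^{\varepsilon/\gamma}(\zeta)\bigr]\cdot|f(\zeta)|^{p/\beta}\cdot\bigl[|K|^{s/\gamma}\delta_{\Omega}^{-\varepsilon/\gamma}(\zeta)\bigr],
\]
the identities $s/q+s/\gamma=1$, $p/q+p/\beta=1$ together with the cancellation of the $\delta_{\Omega}(\zeta)$-weights making this an honest equality.

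Applying H\"older to the $\zeta$-integral defining $Tf(z)$ and bounding the third factor by $C^{1/\gamma}\delta_{\Omega}^{-\varepsilon/\gamma}(z)$ via Condition~(1), I would reach
\[
|Tf(z)|^{q}\lesssim\|f\|_{L^{p}(\mu)}^{pq/\beta}\,\delta_{\Omega}^{-\varepsilon q/\gamma}(z)\int_{\Omega}|K(z,\zeta)|^{s}|f(\zeta)|^{p}\delta_{\Omega}^{\varepsilon q/\gamma}(\zeta)\,d\mu(\zeta).
\]
Integrating this in $z$ against $d\nu$, swapping the two integrals by Fubini, and applying Condition~(2) at parameter $\varepsilon q/\gamma$ to the inner $z$-integral $\int|K|^{s}\delta_{\Omega}^{-\varepsilon q/\gamma}(z)\,d\nu(z)$, the two $\delta_{\Omega}^{\pm\varepsilon q/\gamma}(\zeta)$-weights cancel exactly and the algebraic identity $pq/\beta+p=q$ finally yields $\|Tf\|_{L^{q}(\nu)}\lesssim\|f\|_{L^{p}(\mu)}$.

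The only bookkeeping point that requires care is that Condition~(2) must be invoked with parameter $\varepsilon q/\gamma=\varepsilon(q/s-1)$ rather than $\varepsilon$ itself, so $\varepsilon$ has to be chosen small enough that $\varepsilon(q/s-1)\le\varepsilon_{0}$; this is always possible since $p,q,s$ are fixed. The degenerate endpoint $\gamma=\infty$ (which occurs exactly when $p=1$, whence $q=s$) can be handled separately: the decomposition collapses to two factors and the conclusion follows from Minkowski's integral inequality together with the unweighted limit ($\varepsilon\to0^{+}$) of Condition~(2).
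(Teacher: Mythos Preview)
Your argument is correct and follows essentially the same route as the paper: a three-factor H\"older decomposition of $|Kf|$, application of hypothesis~(1) to bound the pure-$K$ factor, then integration in $z$, Fubini, and hypothesis~(2) at an adjusted parameter to close the estimate. The only cosmetic difference is the choice of H\"older triple---the paper uses $(q,p',s')$ with conjugate exponents, while you use the Young-style triple $(q,\beta,\gamma)$---which changes the auxiliary weight from $\varepsilon\frac{p-1}{p}q$ to $\varepsilon q/\gamma$ but leaves the scheme and its justification unchanged.
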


\begin{proof}[Short proof]
This is exactly the proof given by M. Range in his book: let $\varepsilon$
be sufficiently small. Writing 
\[
Kf=\left(K^{s}f^{p}\delta_{\Omega}^{\varepsilon\frac{p-1}{p}q}(\zeta)\right)^{\nicefrac{1}{q}}\left(K^{1-\frac{s}{q}}\delta_{\Omega}^{-\varepsilon\frac{p-1}{p}}\right)f^{1-\frac{p}{q}},
\]
H\"{o}lder's inequality (with $\frac{1}{q}+\frac{p-1}{p}+\frac{s-1}{s}=1$)
gives
\begin{multline*}
\left|Tf(z)\right|\leq\left(\int_{\Omega}\left|K(z,\zeta)\right|^{s}\delta_{\Omega}^{\varepsilon\frac{p-1}{p}q}(\zeta)\left|f\right|^{p}(\zeta)d\mu(\zeta)\right)^{\nicefrac{1}{q}}\\
\left(\int_{\Omega}\left|K(z,\zeta)\right|^{s}\delta_{\Omega}^{-\varepsilon}(\zeta)\right)^{\frac{p-1}{p}}\left(\int_{\Omega}\left|f(\zeta)\right|^{p}d\mu(\zeta)\right)^{\frac{s-1}{s}}.
\end{multline*}
The first hypothesis of the lemma gives (for $\varepsilon\leq\varepsilon_{0}$)
\begin{multline*}
\left|Tf(z)\right|^{q}\leq C\left(\int_{\Omega}\left|K(z,\zeta)\right|^{s}\delta_{\Omega}^{\varepsilon\frac{p-1}{p}q}(\zeta)\delta_{\Omega}^{-\varepsilon\frac{p-1}{p}q}(z)\left|f\right|^{p}(\zeta)d\mu(\zeta)\right)\\
\left(\int_{\Omega}\left|f(\zeta)\right|^{p}d\mu(\zeta)\right)^{q\frac{s-1}{s}}.
\end{multline*}
Integration with respect to the measure $d\nu(z)$ gives (using the
second hypothesis of the lemma with $\varepsilon\frac{p-1}{p}q\leq\varepsilon_{0}$)
\[
\int_{\Omega}\left|Tf(z)\right|^{q}d\nu(z)\leq C^{2}\left(\int_{\Omega}\left|f\right|^{p}d\mu\right)^{\nicefrac{q}{p}}.
\]

\end{proof}

Applying this lemma to the operator $T_{K}$ (formula (\ref{eq:operator_K1}))
with $\mu=\delta_{\Omega}^{\gamma}d\lambda$ and $\nu=\delta_{\Omega}^{\gamma'}d\lambda$,
the required estimates on $K_{N}^{1}$ are summarized in the following
Lemma:
\begin{sllem}
\label{lem:estimates-kernel}\quad\mynobreakpar
\begin{enumerate}
\item Let $\mu_{0}=\frac{1}{m(\gamma+n)+1}$. Then for $-1<\gamma<N-1$
and $\varepsilon>0$ sufficiently small,
\[
\int_{\Omega}\left|K_{N}^{1}\left(z,\zeta\right)\right|^{1+\mu_{0}}\delta_{\Omega}(\zeta)^{-\mu_{0}\gamma-\varepsilon}d\lambda(\zeta)\lesssim\delta_{\Omega}(z)^{-\varepsilon}.
\]

\item Let $\mu_{0}=\frac{1-m(\gamma-\gamma')}{m(\gamma+n)+1}$. Then for
$-1<\gamma<N-1$ and $\varepsilon>0$ sufficiently small,
\[
\int_{\Omega}\left|K_{N}^{1}\left(z,\zeta\right)\right|^{1+\mu_{0}}\frac{\delta_{\Omega}(z)^{\gamma'-\varepsilon}}{\delta_{\Omega}(\zeta)^{(1+\mu_{0})\gamma}}d\lambda(z)\lesssim\delta_{\Omega}(\zeta)^{-\varepsilon}.
\]

\end{enumerate}
\end{sllem}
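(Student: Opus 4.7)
The plan is to reduce both inequalities to a dyadic shell decomposition in the pseudo-distance $d$, applying the pointwise bounds of \lemref{basic-estimates-kernel} on each shell and integrating with \eqref{eq:integral-mod(z-zeta)-inverse-power-1+mu}.

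For (1), set $\delta=\delta_\Omega(z)$ and decompose $\Omega=(\Omega\cap P_\delta(z))\cup\bigcup_{k\ge 0}A_k$ with $A_k=P_{2^{k+1}\delta}(z)\setminus P_{2^k\delta}(z)$. On the central polydisc $\Omega\cap P_\delta(z)$ we have $\delta_\Omega(\zeta)\simeq\delta$ and part~(1) of \lemref{basic-estimates-kernel} gives $|K_N^1(z,\zeta)|^{1+\mu_0}\lesssim\bigl(\prod_{i=1}^{n-1}\tau_i(z,\delta)\bigr)^{-(1+\mu_0)}|z-\zeta|^{-(1+\mu_0)}$; integrating against \eqref{eq:integral-mod(z-zeta)-inverse-power-1+mu} at radius $\delta$ and using the tangential bound $\tau_j(z,\delta)\lesssim\delta^{1/m}$ coming from the finite-type hypothesis controls this contribution by $\delta^{-\varepsilon}$ for the stated $\mu_0$. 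On each annulus $A_k$ we apply part~(2) of \lemref{basic-estimates-kernel} with $\varepsilon=2^k\delta$ and a free integer $k'\le N+n-1$; because $\zeta\in P_{2^{k+1}\delta}(z)$ forces $|\rho(\zeta)|\lesssim 2^k\delta$, the factor $|\rho(\zeta)|^{k'(1+\mu_0)}/(2^k\delta)^{k'(1+\mu_0)}$ absorbs the weight $\delta_\Omega(\zeta)^{-\mu_0\gamma-\varepsilon}\simeq|\rho(\zeta)|^{-\mu_0\gamma-\varepsilon}$ into $(2^k\delta)^{-\mu_0\gamma-\varepsilon}$ once $k'$ is chosen so that $k'(1+\mu_0)-\mu_0\gamma-\varepsilon>0$. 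The scaling $\tau_i(z,2^k\delta)\lesssim 2^k\tau_i(z,\delta)$ from \eqref{eq:comp-tau-epsilon-tau-lambda-epsilon} then shows that the residual power of $2^k$ becomes strictly negative for $k'$ large, so the series $\sum_k$ converges geometrically and is dominated by $\delta^{-\varepsilon}$.

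The proof of (2) is entirely parallel: swap the roles of $z$ and $\zeta$, using the version of \lemref{basic-estimates-kernel}(2) with $z\in P_{2\varepsilon}(\zeta)\setminus P_\varepsilon(\zeta)$, decompose $\Omega$ into dyadic annuli centered at $\zeta$, and integrate in $z$. The factor $\delta_\Omega(\zeta)^{-(1+\mu_0)\gamma}$ is constant in this integration, while $\delta_\Omega(z)^{\gamma'-\varepsilon}$ on the $k$-th shell is controlled by an appropriate power of $2^k\delta_\Omega(\zeta)$ (depending on the sign of $\gamma'-\varepsilon$). The fact that the admissible $\mu_0$ drops to $(1-m(\gamma-\gamma'))/(m(\gamma+n)+1)$ reflects exactly the extra constraint imposed by the additional weight factor $\delta_\Omega(z)^{\gamma'}$ when $\gamma'\le\gamma$.

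The main obstacle is the bookkeeping of exponents. With the specific $\mu_0$ of the statement one must verify that, after summing the geometric series in $k$, every residual power of $\delta$ (respectively $\delta_\Omega(\zeta)$) has exponent $\ge -\varepsilon$, and that the free parameter $k'$ can be fixed once and for all as a function of $N$, $\gamma$ and $\varepsilon$ and hence absorbed in the implicit constant. This amounts to combining the finite-type volume bound $\prod_{i=1}^{n}\tau_i(z,\varepsilon)\lesssim \varepsilon^{(m+n-1)/m}$ with the precise arithmetic of the exponent $\mu_0=1/(m(\gamma+n)+1)$ (respectively its analogue in (2)).
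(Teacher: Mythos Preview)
Your approach is the same as the paper's: dyadic decomposition of $\Omega$ in the pseudo-distance $d$ around $z$ (respectively $\zeta$), application of \lemref{basic-estimates-kernel} on each shell, integration via \eqref{eq:integral-mod(z-zeta)-inverse-power-1+mu}, and a geometric sum. One small correction in your bookkeeping: the decay in $k$ on the annuli is not obtained by pushing $k'$ large (once $k'$ exceeds the threshold $k'(1+\mu_0)>\mu_0\gamma+\varepsilon$ needed to absorb the weight, its contribution cancels completely); rather, the paper uses the direct finite-type bound $\tau_n(z,2^k\delta)\lesssim(2^k\delta)^{1/m}$ at each scale, so that the $k$-th shell contributes exactly $(2^k\delta)^{-\mu_0(\gamma+n)+(1-\mu_0)/m-\varepsilon}=(2^k\delta)^{-\varepsilon}$ --- this identity is precisely what singles out the stated value of $\mu_0$, and the coarser scaling $\tau_i(z,2^k\delta)\lesssim 2^k\tau_i(z,\delta)$ you invoke is not what is actually used.
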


We now prove this last lemma.
\begin{proof}[Proof of (1) of \lemref{estimates-kernel}]
$K_{N}^{1}$ being bounded, uniformly in $\left(z,\zeta\right)$,
outside $P_{\varepsilon_{0}}(z)$, it is enough to prove that
\[
\int_{P_{\varepsilon_{0}}(z)}\left|K_{N}^{1}(z,\zeta)\right|^{1+\mu_{0}}\delta_{\Omega}^{-\gamma\mu_{0}-\varepsilon}(\zeta)d\lambda(\zeta)\lesssim\delta_{\Omega}^{-\varepsilon}(z)
\]
for $\varepsilon_{0}$ and $\varepsilon$ sufficiently small. As this
is trivial if $z$ is far from the boundary, we assume that $z$ is
sufficiently close to $\partial\Omega$.

Let $A(z,\zeta)=K_{N}^{1}(z,\zeta)\left|z-\zeta\right|$. If $\zeta\in P\left(z,\delta_{\Omega}(z)\right)$
then $\delta_{\Omega}(z)\simeq\delta_{\Omega}(\zeta)$ and, by (2)
of \lemref{basic-estimates-kernel},
\begin{equation}
\left|A(z,\zeta)\right|^{1+\mu_{0}}\delta_{\Omega}^{-\gamma\mu_{0}-\varepsilon}(\zeta)\lesssim\delta_{\Omega}(z)^{-\mu_{0}(\gamma+n)-\varepsilon}\prod_{j=1}^{n-1}\tau_{j}^{2}\left(z,\delta_{\Omega}(z)\right).\label{eq:estimate-Z(z,zeta)-around-z}
\end{equation}
Thus, by (\ref{eq:integral-mod(z-zeta)-inverse-power-1+mu}), we get
\begin{eqnarray*}
\int_{P(z,\delta_{\Omega}(z))}\left|K_{N}^{1}(z,\zeta)\right|^{1+\mu_{0}}\delta_{\Omega}^{-\gamma\mu_{0}-\varepsilon}(\zeta)d\lambda(\zeta) & \lesssim & \delta_{\Omega}(z)^{-\mu_{0}(\gamma+n)-\varepsilon+\frac{1-\mu_{0}}{m}}\\
 & = & \delta_{\Omega}(z)^{-\varepsilon}.
\end{eqnarray*}

Now, let $\zeta\in P_{2^{i}\delta_{\Omega}(z)}(z)\setminus P_{2^{(i+1)}\delta_{\Omega}(z)}(z)$,
if $N$ is sufficiently large ($N\geq\gamma+n+1$), by (3) of \lemref{basic-estimates-kernel},
we have
\[
\left|A(z,\zeta)\right|^{1+\mu_{0}}\delta_{\Omega}^{-\gamma\mu_{0}-\varepsilon}(\zeta)\lesssim\left(2^{i}\delta_{\Omega}(z)\right)^{-\mu_{0}(\gamma+n)-\varepsilon}\prod_{j=1}^{n-1}\tau_{j}^{2}\left(z,2^{i}\delta_{\Omega}(z)\right)
\]

which gives ((\ref{eq:integral-mod(z-zeta)-inverse-power-1+mu}))
\begin{eqnarray*}
\int_{P^{i}(z)}\left|K_{N}^{1}(z,\zeta)\right|^{1+\mu_{0}}\delta_{\Omega}^{-\gamma\mu_{0}-\varepsilon}(\zeta)d\lambda(\zeta) & \lesssim & \left(2^{i}\delta_{\Omega}(z)\right)^{-\mu_{0}(\gamma+n)-\varepsilon+\frac{1-\mu_{0}}{m}}\\
 & = & \delta_{\Omega}(z)^{-\varepsilon}\left(2^{i}\right)^{-\varepsilon},
\end{eqnarray*}

finishing the proof.
\end{proof}

\begin{proof}[Proof of (2) of \lemref{estimates-kernel}]
As in the preceding proof we have to show that
\[
\int_{P_{\varepsilon_{0}}(\zeta)}\left|\frac{K_{N}^{1}(z,\zeta)}{\delta_{\Omega}(\zeta)^{\gamma}}\right|^{1+\mu_{0}}\delta_{\Omega}(z)^{\gamma'-\varepsilon}d\lambda(z)\lesssim\delta_{\Omega}(\zeta)^{-\varepsilon}.
\]

If $z\in P\left(\zeta,\delta_{\Omega}(z)\right)$ then $\delta_{\Omega}(\zeta)\simeq\delta_{\Omega}(z)$,
the estimate (\ref{eq:estimate-Z(z,zeta)-around-z}), which is still
valid replacing $\tau_{j}\left(z,\delta_{\Omega}(z)\right)$ by $\tau_{j}\left(\zeta,\delta_{\Omega}(\zeta)\right)$
(\lemref{maj-deriv-rho-equiv-tho-i-z-zeta}), and (\ref{eq:integral-mod(z-zeta)-inverse-power-1+mu})
(interchanging the roles of $z$ and $\zeta$), we immediately get
\begin{eqnarray*}
\int_{P\left(\zeta,\delta_{\Omega}(\zeta)\right)}\left|\frac{K_{N}^{1}(z,\zeta)}{\delta_{\Omega}(\zeta)^{\gamma}}\right|^{1+\mu_{0}}\delta_{\Omega}(z)^{\gamma'-\varepsilon}d\lambda(z) & \lesssim & \delta_{\Omega}(\zeta)^{-\mu_{0}(\gamma+n)-(\gamma-\gamma')+\frac{1-\mu_{0}}{m}-\varepsilon}\\
 & = & \delta_{\Omega}(\zeta)^{-\varepsilon}.
\end{eqnarray*}

Assume now $z\in P^{i}(\zeta)=P_{2^{i}\delta_{\Omega}(\zeta)}(\zeta)\setminus P_{2^{(i+1)}\delta_{\Omega}(\zeta)}(\zeta)$.

If $\gamma'-\varepsilon\geq0$, using $\delta_{\Omega}(z)\lesssim2^{i}\delta_{\Omega}(\zeta)$,
(3) of \lemref{basic-estimates-kernel} and (\ref{eq:integral-mod(z-zeta)-inverse-power-1+mu})
give
\begin{eqnarray*}
\int_{P^{i}(\zeta)}\left|\frac{K_{N}^{1}(z,\zeta)}{\delta_{\Omega}(\zeta)^{\gamma}}\right|^{1+\mu_{0}}\delta_{\Omega}(z)^{\gamma'-\varepsilon}d\lambda(z) & \lesssim & \left(2^{i}\delta_{\Omega}(\zeta)\right)^{-\mu_{0}(\gamma+n)-(\gamma-\gamma')+\frac{1-\mu_{0}}{m}-\varepsilon}\\
 & = & \delta_{\Omega}(\zeta)^{-\varepsilon}\left(2^{i}\right)^{-\varepsilon},
\end{eqnarray*}

finishing the proof in that case.

If $-1<\gamma'-\varepsilon$$\leq0$, as
\[
\int_{P^{i}(\zeta)}\left|\frac{\delta_{\Omega}(z)}{\delta_{\Omega}(\zeta)}\right|^{\gamma'-\varepsilon}\frac{d\lambda(z)}{\left|z-\zeta\right|}\lesssim_{\gamma'-\epsilon}\tau_{n}\left(\zeta,2^{i}\delta_{\Omega}(\zeta)\right)\prod_{j=1}^{n-1}\tau_{j}^{i}\left(\zeta,2^{i}\delta_{\Omega}(\zeta)\right),
\]

the proof is done as before using (3) of \lemref{basic-estimates-kernel}.
\end{proof}
The proof of (1) of \thmref{d-bar-q-gamma'-p-gamma-lip} is now complete.
\end{proof}
\medskip{}

\begin{proof}[Proof of (2) and (3) of \thmref{d-bar-q-gamma'-p-gamma-lip}]
By the Hardy-Littlewood lemma we have to prove the two following
inequalities:
\begin{itemize}
\item if $p=m(\gamma+n)+2$, $\nabla_{z}\left(\int_{\Omega}f(\zeta)\wedge K_{N}^{1}(z,\zeta)\right)\lesssim\delta_{\Omega}(z)^{-1}$,
\item if $p>m(\gamma+n)+2$, $\nabla_{z}\left(\int_{\Omega}f(\zeta)\wedge K_{N}^{1}(z,\zeta)\right)\lesssim\delta_{\Omega}(z)^{\alpha-1}$.
\end{itemize}
Then, using H\"{o}lder's inequality these two estimates are consequences
of the following lemma:
\begin{sllem}
Let $p\geq m(\gamma+n)+2$, $p'$ the conjugate of $p$ (i.e. $\frac{1}{p}+\frac{1}{p'}=1$)
and let $\alpha=\frac{1}{m}\left[1-\frac{m(\gamma+n)+2}{p}\right]$.
Then
\[
\int_{\Omega}\left|\nabla_{z}K_{N}^{1}(z,\zeta)\right|^{p'}\delta_{\Omega}(\zeta)^{-\gamma\nicefrac{p'}{p}}\lesssim\delta_{\Omega}(z)^{p'(\alpha-1)}.
\]
\end{sllem}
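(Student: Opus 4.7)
The plan is to follow the scheme of the proof of \lemref{estimates-kernel}. Since $\nabla_z K_N^1(z,\zeta)$ is uniformly bounded for $\zeta$ outside $P_{\varepsilon_0}(z)$, and since the claimed estimate is trivial when $z$ is bounded away from $\partial\Omega$, I reduce to bounding
\[
\int_{P_{\varepsilon_0}(z)} |\nabla_z K_N^1(z,\zeta)|^{p'}\,\delta_\Omega(\zeta)^{-\gamma p'/p}\,d\lambda(\zeta)
\]
for $z$ close to the boundary, and split the domain as $P(z,\delta_\Omega(z)) \cup \bigcup_{i\geq 0} P^i(z)$, where $P^i(z) := P_{2^{i+1}\delta_\Omega(z)}(z)\setminus P_{2^i\delta_\Omega(z)}(z)$.

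On the inner polydisc $P(z,\delta_\Omega(z))$, $\delta_\Omega(\zeta) \simeq \delta_\Omega(z)$, so the weight factors out. I first derive the gradient analog of (1) of \lemref{basic-estimates-kernel} (not explicitly stated in the excerpt),
\[
|\nabla_z K_N^1(z,\zeta)| \lesssim \frac{1}{\delta_\Omega(z)\,\prod_{j=1}^{n-1}\tau_j(z,\delta_\Omega(z))\,|z-\zeta|},
\]
by differentiating the explicit formula for $K_N^1$ in $z$ and applying \lemref{Lemma-maj-deriv_rho_Q} together with (\ref{eq:real-part-Q-zeta-z-plus-1}). Raising to the $p'$-th power, integrating $|z-\zeta|^{-p'}$ via (\ref{eq:integral-mod(z-zeta)-inverse-power-1+mu}) with $\mu = p'-1 < 1$ (legitimate since $p\geq m(\gamma+n)+2\geq 2$ forces $p'\leq 2$), and using the finite-type bound $\tau_j(z,\delta) \lesssim \delta^{1/m}$, this piece is controlled by $\delta_\Omega(z)^{E}$ where
\[
E = 2 - 2p' - \gamma p'/p + (n-1)(2-p')/m.
\]
A direct computation using $1/p + 1/p' = 1$ and the definition of $\alpha$ gives
\[
E - p'(\alpha-1) = \frac{(m+n-2)p + (m-2)(n-2)}{m(p-1)} \geq 0,
\]
so this contribution is dominated by $\delta_\Omega(z)^{p'(\alpha-1)}$.

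For a shell $P^i(z)$, set $\varepsilon_i = 2^i\delta_\Omega(z)$ and apply (2) of \lemref{basic-estimates-kernel}, choosing $k\leq N+n-1$ so that $kp' \geq \gamma p'/p$ (possible because $N$ is fixed large from the outset). Since $\delta_\Omega(\zeta) = |\rho(\zeta)| \lesssim \varepsilon_i$ throughout $P_{\varepsilon_i}(z)$, the singular weight gets absorbed:
\[
|\rho(\zeta)|^{kp'}\,\delta_\Omega(\zeta)^{-\gamma p'/p} = \delta_\Omega(\zeta)^{kp'-\gamma p'/p} \lesssim \varepsilon_i^{kp'-\gamma p'/p}.
\]
The same integration step then yields a shell contribution $\lesssim \varepsilon_i^{E}$ with the same exponent $E$ as above. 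Summing over $i$ (with the range truncated at $i \lesssim \log_2(\varepsilon_0/\delta_\Omega(z))$), the sum is $\lesssim \delta_\Omega(z)^{E}$ if $E\leq 0$ and bounded by a constant if $E>0$; in either case, since $\alpha\leq 1/m < 1$ implies $p'(\alpha-1) < 0$, the sum is dominated by $\delta_\Omega(z)^{p'(\alpha-1)}$. The main obstacle, handling the singular weight $\delta_\Omega(\zeta)^{-\gamma p'/p}$ on shells approaching $\partial\Omega$, is thus overcome precisely by the freedom in choosing $k$ in estimate (2) of \lemref{basic-estimates-kernel}, exactly as in the proof of \lemref{estimates-kernel}.
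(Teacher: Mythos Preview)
Your treatment of the outer shells $P^i(z)$ parallels the paper's and is essentially correct, but there is a genuine gap in your handling of the inner polydisc $P(z,\delta_\Omega(z))$. The uniform gradient bound you assert there,
\[
|\nabla_z K_N^1(z,\zeta)| \lesssim \frac{1}{\delta_\Omega(z)\,\prod_{j=1}^{n-1}\tau_j(z,\delta_\Omega(z))\,|z-\zeta|},
\]
is false. For $(0,1)$-forms the kernel has the explicit form $K_N^1 = C\,\rho(\zeta)^{N+n-1}\, s\wedge(\partial_{\bar\zeta}Q)^{n-1}\big/\bigl(|z-\zeta|^2 D^{N+n-1}\bigr)$ with $D=\frac{1}{K_0}S+\rho$, and differentiating the Bochner--Martinelli factor $s/|z-\zeta|^2$ in $z$ produces a term of size $1/|z-\zeta|^{2}$, not $1/|z-\zeta|$. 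Equivalently, the shell estimate in (2) of \lemref{basic-estimates-kernel} applied at scale $\varepsilon=2^{-i}\delta_\Omega(z)$ (with $k=0$) gives
\[
|\nabla_z K_N^1|\lesssim \frac{1}{2^{-i}\delta_\Omega(z)\,\prod_j\tau_j(z,2^{-i}\delta_\Omega(z))\,|z-\zeta|},
\]
which blows up as $i\to\infty$ and is \emph{not} dominated by your proposed uniform bound. Consequently your integration step, which relies on $\int|z-\zeta|^{-p'}$ and formula (\ref{eq:integral-mod(z-zeta)-inverse-power-1+mu}) with $\mu=p'-1<1$, does not control the actual kernel near the diagonal.

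The paper closes this gap precisely by \emph{also} decomposing the inner region into dyadic shells $P(z,2^{-(i-1)}\delta_\Omega(z))\setminus P(z,2^{-i}\delta_\Omega(z))$, $i\geq1$, and applying the gradient shell estimate at each scale. The extra factor $1/(2^{-i}\delta_\Omega(z))$ in the gradient bound, raised to the power $p'$, is then compensated by the $(2^{-i})^{2}$ gain coming from the shrinking volume (through $\tau_1(z,2^{-i}\delta_\Omega(z))\simeq 2^{-i}\delta_\Omega(z)$ in the integral (\ref{eq:integral-mod(z-zeta)-inverse-power-1+mu})), yielding a net summable decay in $i$ and the desired bound $\delta_\Omega(z)^{p'(\alpha-1)}$. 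Your argument can be repaired by inserting this inner dyadic decomposition.
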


\begin{proof}[Proof of the lemma]
Denote $p'=1+\eta$ so that $\nicefrac{p'}{p}=\eta$ and $\nicefrac{1}{p}=\frac{\eta}{1+\eta}$.
By the basic estimates of $K_{N}^{1}$ (and the fact that $-\frac{\gamma p'}{p}>-1$)
it suffices to estimate the above integral when the domain of integration
is reduced to $P(z,\varepsilon_{0})$.

Assume first that $\zeta\in P(z,2^{i}\delta_{\Omega}(z))\setminus P(z,2^{i-1}\delta_{\Omega}(z))$.
Then, by (3) of \lemref{basic-estimates-kernel}, we have
\[
\left|\nabla_{z}K_{N}^{1}(z,\zeta)\right|\lesssim\frac{\left|\delta_{\Omega}(\zeta)\right|^{\nicefrac{\gamma}{p}}}{\left(2^{i}\delta_{\Omega}(z)\right)^{1+\nicefrac{\gamma}{p}}}\frac{1}{\prod_{j=1}^{n-1}\tau_{j}^{2}\left(z,2^{i}\delta_{\Omega}(z)\right)}\frac{1}{\left|z-\zeta\right|},
\]
and by (\ref{eq:integral-mod(z-zeta)-inverse-power-1+mu}), we get
\begin{eqnarray*}
\int_{P(z,2^{i}\delta_{\Omega}(z))\setminus P(z,2^{i-1}\delta_{\Omega}(z))}\left|\nabla_{z}K_{N}^{1}(z,\zeta)\right|^{p'}\delta_{\Omega}(\zeta)^{-\gamma\nicefrac{p'}{p}} & d\lambda(\zeta)\lesssim & \left(2^{i}\delta_{\Omega}(z)\right)^{-n\eta+\frac{1-\eta}{m}-p'-\gamma\eta}\\
 & = & \left(2^{i}\right)^{p'(\alpha-1)}\delta_{\Omega}(z)^{p'(\alpha-1)}.
\end{eqnarray*}

Assume now that $\zeta\in P(z,2^{-(i-1)}\delta_{\Omega}(z))\setminus P(z,2^{-i}\delta_{\Omega}(z))$.
Then, by (3) of \lemref{basic-estimates-kernel}, we have
\[
\left|\nabla_{z}K_{N}^{1}(z,\zeta)\right|\lesssim\frac{1}{\prod_{j=1}^{n-1}\tau_{j}^{2}\left(z,\delta_{\Omega}(z)\right)}\frac{1}{\left|z-\zeta\right|}\frac{1}{2^{-i}\delta_{\Omega}(z)},
\]
and, by (\ref{eq:integral-mod(z-zeta)-inverse-power-1+mu}), we have
\[
\int_{P(z,2^{-(i-1)}\delta_{\Omega}(z))\setminus P(z,2^{-i}\delta_{\Omega}(z))}\frac{d\lambda(\zeta)}{\left|z-\zeta\right|^{1+\eta}}\lesssim\left(2^{-i}\right)^{2}\prod_{j=1}^{n-1}\tau_{j}^{2}\left(z,\delta_{\Omega}(z)\right)\tau_{n}^{\frac{1-\eta}{m}}\left(z,\delta_{\Omega}(z)\right).
\]
Thus, as $\delta_{\Omega}(z)\simeq\delta_{\Omega}(\zeta)$, we get
\[
\int_{P(z,2^{-(i-1)}\delta_{\Omega}(z))\setminus P(z,2^{-i}\delta_{\Omega}(z))}\left|\nabla_{z}K_{N}^{1}(z,\zeta)\right|^{p'}\delta_{\Omega}(\zeta)^{-\gamma\nicefrac{p'}{p}}d\lambda(\zeta)\lesssim2^{-i}\delta_{\Omega}(z)^{p'(\alpha-1)},
\]
finishing the proof of the lemma.
\end{proof}
The proofs of (2) and (3) of \thmref{d-bar-q-gamma'-p-gamma-lip}
are complete.
\end{proof}
The proof of \thmref{d-bar-q-gamma'-p-gamma-lip} is now complete.

\bigskip{}

\subsection{\label{sec:Proof-of-thm-est-d-bar-Nev}Proof of Theorem \ref{thm:d-bar-for-Nev}}

\quad

First we briefly recall the definition of the anisotropic norm $\left\Vert .\right\Vert _{k}$
given in \cite{CDMb}: for $z$ close to the boundary,
\[
\left\Vert f(z)\right\Vert _{k}=\sup_{\left\Vert v_{i}\right\Vert =1}\frac{\left|\left\langle f;v_{1},\ldots,v_{q}\right\rangle (z)\right|}{\sum_{i=1}^{q}k\left(z,v_{i}\right)},
\]
where $k\left(z,v\right)=\frac{\delta_{\Omega}(z)}{\tau\left(z,v,\delta_{\Omega}(z)\right)}$.
The estimate needed for the operator (\ref{eq:operator_K1}) to prove
the theorem is
\begin{sllem}
For $\alpha>0$, we have
\[
\int_{\Omega}\delta_{\Omega}^{\alpha-1}(z)\left|K_{N}^{1}(z,\zeta)\wedge f(\zeta)\right|d\lambda(z)\lesssim\frac{1}{\alpha}\delta_{\Omega}^{\alpha}(\zeta)\left\Vert f(\zeta)\right\Vert _{k}.
\]
\end{sllem}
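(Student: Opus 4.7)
The plan is a dyadic decomposition of the $z$-integral centered at $\zeta$: localize to a small polydisc, then on each polydisc shell combine the size estimates of the kernel (Lemma \ref{lem:basic-estimates-kernel}) with the component-wise control on $f(\zeta)$ provided by the anisotropic norm $\|\cdot\|_k$. I work with $(0,1)$-forms throughout, as in the rest of the section.

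\emph{Localization.} Since $K_N^1(z,\zeta)$ is uniformly bounded for $z \notin P_{\varepsilon_0}(\zeta)$ (with $\varepsilon_0$ small and fixed), and since applying the definition of $\|f\|_k$ to the complex normal unit vector (for which $\tau = \delta_\Omega$) gives $|f(\zeta)| \lesssim \delta_\Omega(\zeta)\|f\|_k$, the complementary contribution is at most
$$|f(\zeta)|\int_\Omega \delta_\Omega^{\alpha-1}(z)\,d\lambda(z) \lesssim \tfrac{1}{\alpha}\delta_\Omega(\zeta)\|f\|_k \lesssim \tfrac{1}{\alpha}\delta_\Omega^\alpha(\zeta)\|f\|_k,$$
using boundedness of $\delta_\Omega$. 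It then suffices to handle $\int_{P_{\varepsilon_0}(\zeta)}$, which I split into the central polydisc $P(\zeta,\delta)$ ($\delta := \delta_\Omega(\zeta)$) and dyadic shells $D_i = P(\zeta,2^i\delta) \setminus P(\zeta,2^{i-1}\delta)$ for $i \geq 1$.

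\emph{Wedge estimate and integration on each shell.} On $D_i$, set $\varepsilon = 2^i\delta$ and work in the $(\zeta,\varepsilon)$-extremal basis $(v_1,\ldots,v_n)$; writing $f(\zeta) = \sum_j \langle f(\zeta);v_j\rangle\,d\bar v_j^*$, the definition of $\|f\|_k$ combined with scaling (\ref{eq:comp-tau-epsilon-tau-lambda-epsilon}) gives $|\langle f(\zeta);v_j\rangle| \lesssim \|f\|_k\,\varepsilon/\tau_j(\zeta,\varepsilon)$. Using the explicit formula for $K_N^1$, the prefactor bound $|\rho(\zeta)|^{N+n-1}/|\tfrac{1}{K_0}S+\rho|^{N+n-1} \lesssim (\delta/\varepsilon)^{N+n-1}$ (Lemma \ref{lem:lemma-maj-rho-S}), the estimates $|s_i| \lesssim \tau_i(\zeta,\varepsilon)$ on $P(\zeta,\varepsilon)$ and $|\partial Q_i/\partial\bar\zeta_j| \lesssim \varepsilon/(\tau_i\tau_j)$ (Lemma \ref{lem:Lemma-maj-deriv_rho_Q}), and a combinatorial expansion of $s \wedge (\partial_{\bar\zeta}Q)^{n-1} \wedge d\bar v_j^*$, one obtains
$$|K_N^1 \wedge f|(z,\zeta) \lesssim \|f\|_k\Big(\frac{\delta}{\varepsilon}\Big)^{N+n-1}\frac{\varepsilon}{|z-\zeta|\,\tau_n(\zeta,\varepsilon)\prod_{l=1}^{n-1}\tau_l^2(\zeta,\varepsilon)}.$$
Applying (\ref{eq:integral-mod(z-zeta)-incerse-delta-alpha-1}) on $P(\zeta,\varepsilon) \supset D_i$ then yields
$$\int_{D_i}\delta_\Omega^{\alpha-1}(z)\,|K_N^1\wedge f|(z,\zeta)\,d\lambda(z) \lesssim \frac{\|f\|_k}{\alpha}\,2^{i(\alpha - N - n + 1)}\,\delta^\alpha.$$

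For $N$ chosen large enough that $\alpha - N - n + 1 < 0$, the geometric series in $i \geq 0$ converges to a constant independent of $\alpha$, and the central polydisc $P(\zeta,\delta)$ is handled identically with $\varepsilon = \delta$. Summing yields the required bound $\lesssim \delta_\Omega^\alpha(\zeta)\|f\|_k/\alpha$. The main obstacle is the pointwise wedge estimate: one must carefully organize the combinatorial expansion of $s \wedge (\partial_{\bar\zeta}Q)^{n-1} \wedge f$ in the extremal basis so that the $\tau_i(\zeta,\varepsilon)$ factors coming from $|s_i|$ and $|\partial Q_i/\partial\bar\zeta_j|$ combine with the $\varepsilon/\tau_j(\zeta,\varepsilon)$ from the anisotropic norm to yield precisely the stated denominator, without extra $\varepsilon/\delta$ factors that would prevent the geometric summation from closing.
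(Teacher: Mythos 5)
Your overall strategy coincides with the paper's: decompose the $z$-integral into the dyadic polydisc shells $P\left(\zeta,2^{i}\delta_{\Omega}(\zeta)\right)\setminus P\left(\zeta,2^{i-1}\delta_{\Omega}(\zeta)\right)$, estimate $K_{N}^{1}\wedge f$ componentwise in a $\zeta$-extremal basis so that the coefficients of $f$ are absorbed into $\left\Vert f\right\Vert _{k}$, integrate each shell with (\ref{eq:integral-mod(z-zeta)-incerse-delta-alpha-1}), and close the geometric sum using the decay $\left(\left|\rho(\zeta)\right|/\varepsilon\right)^{k+N}$ of the kernel; your shell-by-shell numerology is consistent with the paper's. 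But your localization step is genuinely wrong as written. The definition of $\left\Vert \cdot\right\Vert _{k}$ applied to the complex normal $\nu$ gives $k(\zeta,\nu)=\delta_{\Omega}(\zeta)/\tau\left(\zeta,\nu,\delta_{\Omega}(\zeta)\right)=1$, hence only $\left|\left\langle f(\zeta);\nu\right\rangle \right|\leq\left\Vert f\right\Vert _{k}$ (and in general $\left|f(\zeta)\right|\lesssim\left\Vert f\right\Vert _{k}$), not $\left|f(\zeta)\right|\lesssim\delta_{\Omega}(\zeta)\left\Vert f\right\Vert _{k}$; and the inequality $\delta_{\Omega}(\zeta)\lesssim\delta_{\Omega}^{\alpha}(\zeta)$ fails for $\alpha>1$ since $\delta_{\Omega}\to0$. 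The correct way to handle $z\notin P_{\varepsilon_{0}}(\zeta)$ is to use that there $\left|\rho(\zeta)\right|^{k+N}/\left|\frac{1}{K_{0}}S+\rho\right|^{k+N}\lesssim\left(\delta_{\Omega}(\zeta)/\varepsilon_{0}\right)^{k+N}$, which supplies the missing $\delta_{\Omega}^{\alpha}(\zeta)$ once $N\gtrsim\alpha$ --- or, as the paper implicitly does, let the shells run over all $i$ so that they exhaust $\Omega$ and no separate far-field case arises.

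The second issue is that the pointwise wedge estimate you state and then defer is not a peripheral detail: it is the entire content of the paper's proof. The paper establishes it by writing $K_{N}^{1}(z,\zeta)\wedge d\overline{\zeta}_{l}$ explicitly as a sum of terms $W/D$, with $D=\left|z-\zeta\right|^{2}\left(\frac{1}{K_{0}}S(z,\zeta)+\rho(\zeta)\right)^{n+N-1}$ and $W$ containing one factor $\left(\overline{\zeta}_{m}-\overline{z}_{m}\right)$ (one power of which is cancelled against $\left|z-\zeta\right|^{2}$) and $n-1$ factors $\partial Q_{i_{k}}/\partial\overline{\zeta}_{j_{k}}$ (or $\frac{\partial\rho}{\partial\overline{\zeta}}\,Q(\zeta,z)$), each estimated by \lemref{Lemma-maj-deriv_rho_Q}; the index bookkeeping $\left\{ i_{1},\ldots,i_{n-1},m\right\} =\left\{ j_{1},\ldots,j_{n-1},l\right\} =\left\{ 1,\ldots,n\right\} $ is precisely what makes the $\tau$'s pair up with the factor $\varepsilon/\tau_{l}(\zeta,\varepsilon)$ coming from the anisotropic norm. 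Carrying this out, the un-squared $\tau$ left in the denominator is $\tau_{m}$ for the particular term, not necessarily the one you wrote; this is harmless because (\ref{eq:integral-mod(z-zeta)-incerse-delta-alpha-1}) holds with any non-normal index singled out, but it is exactly the kind of slippage your closing paragraph worries about, and it has to be checked rather than postulated. As it stands your proposal reproduces the paper's scaffolding around a central estimate that is asserted, not proved.
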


\begin{proof}
As before, we consider only the case of $\left(0,1\right)$-forms
$f$ and we assume $\zeta$ sufficiently close to the boundary.

Denote $Q_{0}(\zeta)=P\left(\zeta,\delta_{\Omega}(\zeta)\right)$
and $Q_{i}(\zeta)=P\left(\zeta,2^{i}\delta_{\Omega}(\zeta)\right)\setminus P\left(\zeta,2^{i-1}\delta_{\Omega}(\zeta)\right)$,
$i=1,2,\ldots$ and let us prove
\[
\int_{Q_{i}}\delta_{\Omega}^{\alpha-1}(z)\left|K_{N}^{1}(z,\zeta)\wedge f(\zeta)\right|d\lambda(z)\lesssim\frac{1}{2^{i}}\frac{1}{\alpha}\delta_{\Omega}^{\alpha}(\zeta)\left\Vert f(\zeta)\right\Vert _{k}.
\]

Expressing the forms $K_{N}^{1}(z,\zeta)$ and $f(\zeta)$in the coordinate
system $\left(\zeta_{i}\right)_{i}$ associated to a $\left(\zeta,2^{i}\delta_{\Omega}(\zeta)\right)$-extremal
basis, we have to show that, for $i=0,1,\ldots$ and $1\leq l\leq n$,
\[
\int_{Q_{i}(\zeta)}\delta_{\Omega}^{\alpha-1}(z)\left|K_{N}^{1}(z,\zeta)\wedge d\overline{\zeta}_{l}\right|d\lambda(z)\lesssim\frac{1}{2^{i}}\frac{1}{\alpha}\delta_{\Omega}^{\alpha}(\zeta)\left\Vert d\overline{\zeta}_{l}\right\Vert _{k}.
\]

First, we remark that $K_{N}^{1}(z,\zeta)\wedge d\overline{\zeta}_{l}$
is a sum of expressions of the form $\frac{W}{D}$ where
\[
D(\zeta,z)=\left|z-\zeta\right|^{2}\left(\frac{1}{K_{0}}S(z,\zeta)+\rho(\zeta)\right)^{n+N-1},
\]
and,
\[
W=\left(\overline{\zeta}_{m}-\overline{z}_{m}\right)\rho^{N}(\zeta)\prod_{k=1}^{n-1}\frac{\partial Q_{i_{k}}(z,\zeta)}{\partial\overline{\zeta}_{j_{k}}}\bigwedge_{i=1}^{n}\left(d\zeta_{i}\wedge d\overline{\zeta_{i}}\right)
\]
or
\[
W=\left(\overline{\zeta}_{m}-\overline{z}_{m}\right)\rho^{N-1}(\zeta)\frac{\partial\rho(\zeta)}{\partial\overline{\zeta}_{j_{k_{0}}}}Q_{i_{k_{0}}}(\zeta,z)\prod_{\SU{1\leq k\leq n-1\AS k\neq k_{0}}}\frac{\partial Q_{i_{k}}(z,\zeta)}{\partial\overline{\zeta}_{j_{k}}}\bigwedge_{i=1}^{n}\left(d\zeta_{i}\wedge d\overline{\zeta_{i}}\right),
\]
with $\left\{ i_{1,\ldots,i_{n-1},m}\right\} =\left\{ j_{1},\ldots,j_{n-1},l\right\} =\left\{ 1,\ldots,n\right\} $.

Then, using \lemref{Lemma-maj-deriv_rho_Q} (and the properties of
the geometry) we obtain the following estimates:

For $z\in Q_{0}$, $\left|K_{N}^{1}(z,\zeta)\wedge d\overline{\zeta}_{l}\right|$
is bounded by a sum of expressions of the form 
\[
\frac{1}{\prod_{j=1}^{n}\tau_{j}^{2}\left(\zeta,\delta_{\Omega}(\zeta)\right)}\tau_{m}\left(\zeta,\delta_{\Omega}(\zeta)\right)\tau_{l}\left(\zeta,\delta_{\Omega}(\zeta)\right)\frac{1}{\left|z-\zeta\right|}.
\]
This gives (using (\ref{eq:integral-mod(z-zeta)-incerse-delta-alpha-1}))
\begin{eqnarray*}
\int_{Q_{0}}\delta_{\Omega}^{\alpha-1}(z)\left|K_{N}^{1}(z,\zeta)\wedge d\overline{\zeta}_{l}\right|d\lambda(z) & \lesssim & \frac{\delta_{\Omega}^{\alpha-1}(\zeta)}{\alpha}\tau_{l}\left(\zeta,\delta_{\Omega}(\zeta)\right)\\
 & = & \frac{\delta_{\Omega}^{\alpha}(\zeta)}{\alpha}\frac{\tau_{l}\left(\alpha,\delta_{\Omega}(\zeta)\right)}{\delta_{\Omega}(\zeta)}\\
 & \leq & \frac{\delta_{\Omega}^{\alpha}(\zeta)}{\alpha}\left\Vert d\overline{\zeta}_{l}\right\Vert _{k}.
\end{eqnarray*}

For $z\in Q_{i}$, $\left|K_{N}^{1}(z,\zeta)\wedge d\overline{\zeta}_{l}\right|$
is bounded by a sum of expressions of the form
\[
\frac{\delta_{\Omega}(\zeta)}{2^{i}\delta_{\Omega}(\zeta)}\frac{1}{\prod_{j=1}^{n}\tau_{j}^{2}\left(\zeta,\delta_{\Omega}(\zeta)\right)}\tau_{m}\left(\zeta,\delta_{\Omega}(\zeta)\right)\tau_{l}\left(\zeta,\delta_{\Omega}(\zeta)\right)\frac{1}{\left|z-\zeta\right|},
\]
giving, for $N\geq\alpha+2$,
\begin{eqnarray*}
\int_{Q_{i}}\delta_{\Omega}^{\alpha-1}(z)\left|K_{N}^{1}(z,\zeta)\wedge d\overline{\zeta}_{l}\right|d\lambda(z) & \lesssim & \left[\frac{\delta_{\Omega}(\zeta)}{2^{i}\delta_{\Omega}(\zeta)}\right]^{\alpha+1}\frac{\left(2^{i}\delta_{\Omega}(\zeta)\right)^{\alpha}}{\alpha}\frac{\tau_{l}\left(\zeta,2^{i}\delta_{\Omega}(\zeta)\right)}{2^{i}\delta_{\Omega}(\zeta)}\\
 & \lesssim & \frac{1}{2^{i}}\frac{\delta_{\Omega}^{\alpha}(\zeta)}{\alpha}\frac{\tau_{l}\left(\zeta,2^{i}\delta_{\Omega}(\zeta)\right)}{2^{i}\delta_{\Omega}(\zeta)}\\
 & \lesssim & \frac{1}{2^{i}}\frac{\delta_{\Omega}^{\alpha}(\zeta)}{\alpha}\frac{\tau_{l}\left(\zeta,\delta_{\Omega}(\zeta)\right)}{\delta_{\Omega}(\zeta)}\\
 & \leq & \frac{1}{2^{i}}\frac{\delta_{\Omega}^{\alpha}(\zeta)}{\alpha}\left\Vert d\overline{\zeta}_{l}\right\Vert _{k},
\end{eqnarray*}
the penultimate inequality coming from property (\ref{eq:comp-tau-epsilon-tau-lambda-epsilon})
of the geometry.

The lemma is proved and so is \thmref{d-bar-for-Nev}.
\end{proof}

\section{Proof of Theorem \ref{thm:estimates-bergman}}

We use the method developed in \cite{CPDY} for the proofs of theorems
2.1 and 2.3 of that paper.

In \cite{CDM} we prove, in particular, the following result: let
$g$ be a gauge of $D$ and $\rho_{0}=g^{4}e^{1-\nicefrac{1}{g}}-1$
then:
\begin{stthm}[Theorem 2.1 of \cite{CDM}]
\label{thm:regularity-P-omega-0}Let $\omega_{0}=\left(-\rho_{0}\right)^{r}$,
$r$ being a non negative rational number, and let $P_{\omega_{0}}$
be the Bergman projection of the Hilbert space $L^{2}\left(\Omega,\omega_{0}\right)$.
Then, for $p\in\left]1,+\infty\right[$ and $1\leq\beta\leq p\left(r+1\right)-1$,
$P_{\omega_{0}}$ maps continuously the space $L^{p}\left(D,\delta_{D}^{\beta}\right)$
into itself and, for $\alpha>0$, $P_{\omega_{0}}$ maps continuously
the lipschitz space $\Lambda_{\alpha}(D)$ into itself.
\end{stthm}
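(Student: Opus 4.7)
The plan is to prove \thmref{regularity-P-omega-0} via sharp size estimates on the weighted Bergman kernel $K_{\omega_{0}}$ of $L^{2}(D,\omega_{0})$, followed by Schur-type arguments for the $L^{p}$ statement and Hardy--Littlewood arguments for the Lipschitz statement. This is the convex-finite-type analogue of the classical Bekoll\'e--Bonami theory on the ball, producing exactly the Bekoll\'e--Bonami-type range $-1<\beta\le p(r+1)-1$ appearing in the theorem.

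\textbf{Step 1 --- Weighted Bergman kernel.} Adapt the Berndtsson--Andersson construction of \cite{CDMb} (recalled and used above for the $\bar{\partial}$-kernel $K_{N}^{1}$) to produce the weighted reproducing kernel $K_{\omega_{0}}$: the weight $\omega_{0}=(-\rho_{0})^{r}$ is absorbed into a fractional power of the Diederich--Fornaess support function. The defining function $\rho_{0}=g^{4}e^{1-1/g}-1$ is engineered so that $\rho_{0}$ is strictly plurisubharmonic near $\partial D$---a direct Hessian computation using that $\log g$ is plurisubharmonic for the gauge of a convex domain---and this is precisely what allows the fractional power $(-\rho_{0})^{r}$ to enter the kernel algebra as $(1-|z|^{2})^{r}$ does on the ball. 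The resulting estimate is
\[
|K_{\omega_{0}}(z,w)|\;\lesssim\;\frac{1}{\omega_{0}(z)^{1/2}\,\omega_{0}(w)^{1/2}\,\Phi(z,w)^{2}\,\prod_{j=1}^{n-1}\tau_{j}(z,\Phi(z,w))^{2}},
\]
with matching derivative bounds, where $\Phi$ is the McNeal pseudodistance on $D$.

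\textbf{Step 2 --- Schur for the $L^{p}$ statement.} With the kernel bound in hand, apply Schur's lemma to $P_{\omega_{0}}$ with a test function $h(z)=\delta_{D}(z)^{\sigma}$. Using the McNeal dyadic decomposition of $D$ around $z$ (and around $w$) together with the anisotropic volume formula $|P_{\varepsilon}(z)|\simeq\varepsilon^{2}\prod_{j}\tau_{j}(z,\varepsilon)^{2}$, the two Schur integral conditions reduce to one-variable integrals whose convergence imposes algebraic constraints on $\sigma$. A short computation shows that these constraints admit a common $\sigma$ precisely when $-1<\beta\le p(r+1)-1$, yielding the advertised $L^{p}$-boundedness.

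\textbf{Step 3 --- Lipschitz via Hardy--Littlewood.} For $f\in\Lambda_{\alpha}(D)$, use the reproducing identity $\int_{D}K_{\omega_{0}}(z,w)\omega_{0}(w)dV(w)=1$ to write $\nabla_{z}P_{\omega_{0}}f(z)=\int\nabla_{z}K_{\omega_{0}}(z,w)(f(w)-f(z))\omega_{0}(w)dV(w)$. The derivative bound $|\nabla_{z}K_{\omega_{0}}|\lesssim\Phi(z,w)^{-1}|K_{\omega_{0}}|$ and the anisotropic H\"older control $|f(w)-f(z)|\lesssim\|f\|_{\Lambda_{\alpha}}\Phi(z,w)^{\alpha/m}$ on convex finite-type domains then yield, via dyadic decomposition, $|\nabla P_{\omega_{0}}f(z)|\lesssim\|f\|_{\Lambda_{\alpha}}\delta_{D}(z)^{\alpha-1}$ for $0<\alpha\le 1/m$; Hardy--Littlewood concludes.

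The principal obstacle is Step 1: constructing the weighted reproducing kernel and establishing the sharp bound with the weight factors $\omega_{0}^{-1/2}$ appearing symmetrically in $z$ and $w$. This requires threading the fractional power $(-\rho_{0})^{r}$ carefully through the Berndtsson--Andersson machinery of \cite{CDMb}, for which the strict plurisubharmonicity built into $\rho_{0}=g^{4}e^{1-1/g}-1$ is essential (the exponential factor secures the positivity of the complex Hessian and the power $g^{4}$ smooths out the behaviour away from $\partial D$). Steps 2 and 3 are then fairly standard anisotropic Bekoll\'e--Bonami and Hardy--Littlewood arguments.
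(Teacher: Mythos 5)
This statement is not proved in the paper at all: it is imported verbatim as Theorem~2.1 of \cite{CDM} and used as a black box, so there is no internal proof to measure your attempt against. The only indication the paper gives of how it was actually proved is Remark~(2) at the end of the last section: the argument of \cite{CDM} goes through a Forelli--Rudin/Ligocka inflation (\cite{FR75}, \cite{Lig89}), writing $r=\sum\nicefrac{1}{q_{i}}$ with $q_{i}$ positive integers and transferring $P_{\omega_{0}}$ to the Bergman projection of the higher-dimensional domain $\widetilde{D}=\left\{ (z,w)\in\mathbb{C}^{n+m}:\rho_{0}(z)+\sum\left|w_{i}\right|^{2q_{i}}<0\right\}$, whose kernel is estimated pointwise with the finite-type machinery on $\widetilde{D}$. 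That is a genuinely different route from yours, and the difference is not cosmetic: the inflation is exactly where the hypothesis $r\in\mathbb{Q}_{+}$ and the peculiar choice $\rho_{0}=g^{4}e^{1-\nicefrac{1}{g}}-1$ are consumed (they make $\widetilde{D}$ smooth, pseudoconvex and of finite type, not ``strictly plurisubharmonic so that fractional powers enter the kernel algebra''). Your plan uses neither ingredient --- nothing in your Steps 1--3 would break for irrational $r$ --- yet the authors state explicitly that the real-$r$ case is open for them precisely because $\widetilde{D}$ then fails to be smooth. That mismatch is a strong sign that your Step~1 is being asked to do more than you credit it for.

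Concretely, Step~1 has two gaps. First, the Berndtsson--Andersson machinery of \cite{CDMb} produces a holomorphic \emph{reproducing} kernel, not the orthogonal projection onto holomorphic functions in $L^{2}(D,\omega_{0})$; passing from a non-self-adjoint reproducing operator to pointwise bounds for the true Bergman kernel $K_{\omega_{0}}$ is the hard step (Kerzman--Stein comparisons, or the inflation itself), and it is the actual content of \cite{CDM}. Second, even granting a bound, the symmetric form $\omega_{0}(z)^{-1/2}\omega_{0}(w)^{-1/2}\Phi(z,w)^{-2}\prod_{j}\tau_{j}^{-2}$ is not the sharp estimate: already on the disc with $\omega_{0}=(1-|z|^{2})^{r}$ the kernel is $\left|1-z\bar{w}\right|^{-(2+r)}$, i.e.\ the weight enters through a power of the pseudodistance, and since $(1-|z|^{2})^{1/2}(1-|w|^{2})^{1/2}\lesssim\left|1-z\bar{w}\right|$ your bound is strictly weaker off the diagonal. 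Feeding it into the Schur test of Step~2 shrinks the admissible window for the test exponent $\sigma$ (in the disc model, from an interval of length $1+r$ to one of length $1$) and therefore does not reach the full Bekoll\'{e}--Bonami-type range $\beta\leq p(r+1)-1$ claimed in the theorem. Steps~2 and~3 are standard once sharp kernel estimates are in hand and do correspond to the second half of the argument in \cite{CDM}; the proof lives or dies in Step~1, which as written is not established.
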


If $\omega$ is as in \thmref{estimates-bergman} then there exists
a strictly positive $\mathcal{C}^{1}$ function in $\overline{D}$,
$\varphi$, such that $\omega=\varphi\omega_{0}$. Then we compare
the regularity of $P_{\omega_{0}}$ and $P_{\omega}$ using the following
formula (Proposition 3.1 of \cite{CPDY}): for $u\in L^{2}\left(D,\omega\right)$,
\[
\varphi P_{\omega}(u)=P_{\omega_{0}}(\varphi u)+\left(\mathrm{Id}-P_{\omega_{0}}\right)\circ A\left(P_{\omega}(u)\wedge\overline{\partial}\varphi\right),
\]
where $A$ is any operator solving the $\overline{\partial}$-equation
for $\overline{\partial}$-closed forms in $L^{2}\left(D,\omega\right)$.

We first show that $P_{\omega}$ maps continuously $L^{p}\left(\Omega,\delta_{\Omega}^{r}\right)$
into itself. Let $f\in L^{p}\left(D,\delta_{\Omega}^{r}\right)$,
$p\in\left[2,+\infty\right[$. For $A$ we choose the operator $T$
of \propref{d-bar-gain-exponent} with $\gamma=r$, and we choose
$0<\varepsilon\leq\varepsilon_{0}$, $\varepsilon_{0}$ as in \propref{d-bar-gain-exponent},
such that there exists an integer $N$ such that $p=2+N\varepsilon$.
Let us prove, by induction, that $P_{\omega}(f)\in L^{2+k\varepsilon}\left(D,\delta_{D}^{r}\right)$
for $k=0,\ldots,N$.

Assume this is true for $0\leq k<N$. Then by \propref{d-bar-gain-exponent},
\[
A\left(P_{\omega}(f)\wedge\overline{\partial}\varphi\right)\in L^{2+(k+1)\varepsilon}\left(D,\delta_{D}^{r}\right)
\]
and, by \thmref{regularity-P-omega-0}, 
\[
\left(\mathrm{Id}-P_{\omega_{0}}\right)\circ A\left(P_{\omega}(u)\wedge\overline{\partial}\varphi\right)\in L^{2+(k+1)\varepsilon}\left(D,\delta_{D}^{r}\right).
\]
As $\varphi$ is continuous and strictly positive we get $P_{\omega}(f)\in L^{2+(k+1)\varepsilon}\left(D,\delta_{D}^{r}\right)$.

Thus, $P_{\omega}$ maps $L^{p}\left(D,\delta_{D}^{r}\right)$ into
it self for $p\in\left[2,+\infty\right[$. The same result for $p\in\left]1,2\right]$
follows because $P_{\omega}$ is self-adjoint.

\medskip{}

To prove that $P_{\omega}$ maps $L^{p}\left(D,\delta_{D}^{\beta}\right)$
for $-1<\beta\leq r$, we use a similar induction argument using \propref{d-bar-gain-weight}
instead of \propref{d-bar-gain-exponent}:

For $A$ we choose now the operator $T$ of \propref{d-bar-gain-weight}
with $\gamma=r$, and $0<\varepsilon\leq\varepsilon_{0}$, $\varepsilon_{0}$
as in \propref{d-bar-gain-weight} such that there exists an integer
$L$ such that $\beta=r-L\varepsilon$. For $f\in L^{p}\left(D,\delta_{D}^{\beta}\right)$,
assume $P_{\omega}(f)\in L^{2}\left(D,\delta_{D}^{r-l\varepsilon}\right)$,
$0\leq l<L$. Then, \propref{d-bar-gain-weight} and \thmref{regularity-P-omega-0}
imply $\left(\mathrm{Id}-P_{\omega_{0}}\right)\circ A\left(P_{\omega}(u)\wedge\overline{\partial}\varphi\right)\in L^{p}\left(D,\delta_{D}^{r-(l+1)\varepsilon}\right)$
which gives $P_{\omega}(f)\in L^{p}\left(D,\delta_{D}^{r-(l+1)\varepsilon}\right)$.
By induction this gives $P_{\omega}(f)\in L^{p}\left(D,\delta_{D}^{\beta}\right)$,
concluding the proof of (1) of the theorem.

The proof of (2) of the theorem is now easily done: assume $u\in\Lambda_{\alpha}(D)$,
$0<\alpha\leq\nicefrac{1}{m}$. Let $p\leq+\infty$ such that $\alpha=\frac{1}{m}\left[1-\frac{m(r+n)+2}{p}\right]$.
By part (1), $P_{\omega}(u)\in L^{p}(D,\delta_{D}^{r})$, by (3) of
\thmref{d-bar-q-gamma'-p-gamma-lip}, $A\left(P_{\omega}(u)\wedge\overline{\partial}\varphi\right)\in\Lambda_{\alpha}(D)$
($A$ being the operator $T$), and, by \thmref{regularity-P-omega-0},
$\left(\mathrm{Id}-P_{\omega_{0}}\right)\circ A\left(P_{\omega}(u)\wedge\overline{\partial}\varphi\right)\in\Lambda_{\alpha}(D)$
concluding the proof.
\begin{rem*}
\quad\mynobreakpar
\begin{enumerate}
\item The restriction $-1<\beta\leq r$ in \ref{thm:estimates-bergman}
(instead of $0<\beta+1\leq p(r+1)$ in \cite{CDM}) is due to the
method because if $f\in L^{p}\left(D,\delta_{D}^{\beta}\right)$ with
$\beta>r$, a priori $P_{\omega}(f)$ does not exists.
\item The restriction $r\in\mathbb{Q}_{+}$ is not natural and it is very
probable that \thmref{estimates-bergman} is true with $r\in\mathbb{R}_{+}$.
To get that with our method we should first prove the result of \thmref{regularity-P-omega-0}
for $r$ a non negative real number. Looking at the proof in \cite{CDM},
this should be done proving point-wise estimates of the Bergman kernel
of a domain $\widetilde{D}$ of the form
\[
\widetilde{D}=\left\{ (z,w)\in\mathbb{C}^{n+m}\mbox{ such that }\rho_{0}(z)+\sum\left|w_{i}\right|^{2q_{i}}<0\right\} ,
\]
with $q_{i}$ large \emph{real} numbers such that $\sum\nicefrac{1}{q_{i}}=r$.
The difficulty here being that $\widetilde{D}$ is no more $\mathcal{C}^{\infty}$-smooth
and thus the machinery induced by the finite type cannot be used.
\end{enumerate}
\end{rem*}

\bibliographystyle{amsalpha}
%\bibliography{finite-type}

\providecommand{\bysame}{\leavevmode\hbox to3em{\hrulefill}\thinspace}
\providecommand{\MR}{\relax\ifhmode\unskip\space\fi MR }
% \MRhref is called by the amsart/book/proc definition of \MR.
\providecommand{\MRhref}[2]{%
  \href{http://www.ams.org/mathscinet-getitem?mr=#1}{#2}
}
\providecommand{\href}[2]{#2}
\begin{thebibliography}{}

\end{thebibliography}


\begin{thebibliography}{CDM14b}

\bibitem[AB82]{BA82}
M.~Andersson and B.~Berndtsson, \emph{Henkin-{R}amirez formulas with weight
  factors}, Ann. Inst. Fourier \textbf{32} (1982), no.~2, 91--110.

\bibitem[Bar92]{Bar92}
D~Barrett, \emph{Behavior of the {B}ergman projection on the
  {D}iederich-{F}orn\ae{}ss worm}, Acta Math. \textbf{168} (1992), no.~1-2,
  1--10.

\bibitem[BCD98]{Bruna-Charp-Dupain-Annals}
J.~Bruna, Ph. Charpentier, and Y.~Dupain, \emph{Zeros varieties for the
  {N}evanlinna class in convex domains of finite type in $\mathbb{C}^n$}, Ann.
  of Math. \textbf{147} (1998), 391--415.

\bibitem[BG95]{BG95}
A.~Bonami and S.~Grellier, \emph{Weighted {B}ergman projections in domains of
  finite type in $\mathbb{C}^2$}, Contemp. Math. \textbf{189} (1995), 65--80.

\bibitem[Cat87]{Catlin-Est.-Sous-ellipt.}
D.~Catlin, \emph{Subelliptic estimates for $\bar{\partial}$-{N}eumann problem
  on pseudoconvex domains}, Annals of Math \textbf{126} (1987), 131--191.

\bibitem[CDM14a]{CDMb}
P.~Charpentier, Y.~Dupain, and M.~Mounkaila, \emph{Estimates for {S}olutions of
  the $\bar\partial$-{E}quation and {A}pplication to the {C}haracterization of
  the {Z}ero {V}arieties of the {F}unctions of the {N}evanlinna {C}lass for
  {L}ineally {C}onvex {D}omains of {F}inite {T}ype}, J. Geom. Anal. \textbf{24}
  (2014), no.~4, 1860--1881.

\bibitem[CDM14b]{CDM}
\bysame, \emph{Estimates for weighted {B}ergman projections on pseudo-convex
  domains of finite type in $\mathbb{C}^n$}, Complex Var. Elliptic Equ.
  \textbf{59} (2014), no.~8, 1070--1095.

\bibitem[CDM15]{CPDY}
\bysame, \emph{On {E}stimates for {W}eighted {B}ergman {P}rojections}, Proc.
  Amer. Math. Soc. \textbf{143} (2015), no.~12, 5337--5352.

\bibitem[Chr96]{Christ96}
M.~Christ, \emph{Global $\mathcal{C}^\infty$ irregularity of the
  $\overline\partial$-{N}eumann problem for worm domains}, J. Amer. Math. Soc.
  \textbf{9} (1996), no.~4, 1171--1185.

\bibitem[CKM93]{CKM93}
Z.~Chen, S.~G. Krantz, and D.~Ma, \emph{Optimal ${L}^p$ estimates for the
  $\bar\partial$-equation on complex ellipsoids in $\mathbb{C}^n$}, Manusc.
  Math. \textbf{80} (1993), 131--149.

\bibitem[CL97]{CDC97}
D.~C. Chang and B.~Q. Li, \emph{Sobolev and {L}ipschitz estimates for weighted
  {B}ergman projections}, Nagoya Math. J. \textbf{147} (1997), 147--178.

\bibitem[Con02]{Conrad_lineally_convex}
M.~Conrad, \emph{Anisotrope optimale {P}seudometriken f\"ur lineal konvex
  {G}ebeite von endlichem {T}yp (mit {A}nwendungen)}, PhD thesis,
  Berg.Universit\"at-GHS Wuppertal (2002).

\bibitem[Cum01a]{Cumenge-estimates-holder}
A.~Cumenge, \emph{Sharp estimates for $\bar\partial$ on convex domains of
  finite type}, Ark. Math. \textbf{39} (2001), no.~1, 1--25.

\bibitem[Cum01b]{Cumenge-Navanlinna-convex}
\bysame, \emph{Zero sets of functions in the {N}evanlinna or the
  {N}evanlinna-{D}jrbachian classes}, Pacific J. Math. \textbf{199} (2001),
  no.~1, 79--92.

\bibitem[DF03]{Diederich-Fornaess-Support-Func-lineally-cvx}
K.~Diederich and J.~E. Fornaess, \emph{Lineally convex domains of finite type:
  holomorphic support functions}, Manuscripta Math. \textbf{112} (2003),
  403--431.

\bibitem[DF06]{Diederich-Fischer_Holder-linally-convex}
K.~Diederich and B.~Fischer, \emph{H\"older estimates on lineally convex
  domains of finite type}, Michigan Math. J. \textbf{54} (2006), no.~2,
  341--452.

\bibitem[Fis01]{MR1815835}
Bert Fischer, \emph{{$L^p$} estimates on convex domains of finite type}, Math.
  Z. \textbf{236} (2001), no.~2, 401--418.

\bibitem[FR75]{FR75}
F.~Forelli and W.~Rudin, \emph{Projections on {S}paces of {H}olomorphic
  {F}unctions in {B}alls}, Indiana Univ. Math. J. \textbf{24} (1975), no.~6,
  593--602.

\bibitem[Hef02]{Hef02}
T.~Hefer, \emph{H\^older and ${L}^p$ estimates for $\bar\partial$ on convex
  domains of finite type depending on {C}atlin's multitype}, Math. Z.
  \textbf{242} (2002), 367--398.

\bibitem[KN65]{Kohn-Nirenberg-1965}
J.~J. Kohn and L.~Nirenberg, \emph{Non coercive boundary value problems}, Comm.
  Pure Appl. Math. \textbf{18} (1965), 443--492.

\bibitem[Koh73]{Kohn-defining-function}
J.~J. Kohn, \emph{Global regularity for $\bar\partial$ on weakly pseudo-convex
  manifolds}, Trans. Amer. Math. Soc. \textbf{181} (1973), 273--292.

\bibitem[Lig89]{Lig89}
E.~Ligocka, \emph{On the {F}orelli-{R}udin construction and weighted {B}ergman
  projections}, Studia Math. \textbf{94} (1989), no.~3, 257--272.

\bibitem[{Ran}86]{RM86}
{Range M.}, \emph{Holomorphic {F}unctions and {I}ntegrals {R}epresentations in
  {S}everal {C}omplex {V}ariables}, Springer-Verlag, 1986.

\bibitem[vZ]{CZ}
\v{Z}. \v{C}u\v{c}kovi\'c and Y.~Zeytuncu, \emph{Mapping {P}roperties of
  {W}eighted {B}ergman {P}rojection {O}perators on {R}einhardt {D}omains}, To
  appear in Proc. Amer. Math. Soc.

\bibitem[Zey11]{Zey11}
Y.~Zeytuncu, \emph{Weighted {B}ergman projections and kernels: ${L}^p$
  regularity and zeros}, Proc. Amer. Math. Soc. \textbf{139} (2011), no.~6,
  2105--2112.

\bibitem[Zey12]{Zey12}
\bysame, \emph{${L}^p$ regularity of some weighted {B}ergman projections on the
  unit disc}, Turkish J. Math. \textbf{36} (2012), no.~3, 386--394.

\bibitem[Zey13a]{Zey13b}
\bysame, \emph{${L}^p$ {R}egularity of weighted {B}ergman {P}rojections}, Tran.
  Amer. Math. Soc. \textbf{365} (2013), no.~6, 2959--2976.

\bibitem[Zey13b]{Zey13a}
\bysame, \emph{Sobolev regularity of weighted {B}ergman projections on the unit
  disc}, Complex Var. Elliptic Equ. \textbf{58} (2013), no.~3, 309--315.

\bibitem[Zey16]{Zey}
\bysame, \emph{An {A}pplication of the {P}r\'ekopa-{L}eindler {I}nequality and
  {S}obolev {R}egularity of {W}eighted {B}ergman {P}rojections}, To appear in
  Acta Sci. Math. (Szeged) (2016).

\end{thebibliography}

\providecommand{\bysame}{\leavevmode\hbox to3em{\hrulefill}\thinspace}
\providecommand{\MR}{\relax\ifhmode\unskip\space\fi MR }
% \MRhref is called by the amsart/book/proc definition of \MR.
\providecommand{\MRhref}[2]{%
  \href{http://www.ams.org/mathscinet-getitem?mr=#1}{#2}
}
\providecommand{\href}[2]{#2}

\end{document}